\date{}
\tikzstyle{b_vertex}=[circle,fill=black!50,text=white,inner sep=0.8mm,draw]
\tikzstyle{w_vertex}=[circle,fill=white!100,inner sep=0.8mm,draw]
\tikzstyle{d_vertex}=[circle,fill=white!100,inner sep=0.8mm,draw,densely dashed]
\tikzstyle{s_vertex}=[circle,fill=white!100,inner sep=0.8mm,draw,very thick]
\tikzstyle{sd_vertex}=[circle,fill=white!100,inner sep=0.8mm,draw,very thick,densely dashed]
\tikzstyle{point}=[circle,fill=black,inner sep=0.3mm]
\tikzstyle{path_edge}=[thick]
\tikzstyle{snake_line}=[decoration={snake, amplitude = .6mm, segment length = 2mm},decorate,color=gray]
\tikzstyle{strong_line}=[line width=0.25mm, black ]
\tikzstyle{rbox}=[rounded corners=15pt, color=gray!50]
\tikzstyle{strong_arrow}=[arrows={-latex}, line width=1.5mm, color=gray!30]
\newtheorem{theorem}{Theorem}
\newtheorem{lemma}{Lemma}
\newtheorem{corollary}{Corollary}
\newtheorem{proposition}{Proposition}
\newtheorem{conjecture}{Conjecture}
\newcommand{\decomp}{chain~}
\author{Vadim Lozin\thanks{Mathematics Institute, University of Warwick, Coventry, CV4 7AL. Email: V.Lozin@warwick.ac.uk.}  
\and Viktor Zamaraev\thanks{Mathematics Institute, University of Warwick, Coventry, CV4 7AL. Email: V.Zamaraev@warwick.ac.uk.}}
\title{The structure and the number of $P_7$-free bipartite graphs\thanks{This research was supported by EPSRC, grant EP/L020408/1.}}
\begin{document}
\maketitle

\begin{abstract}
We show that the number of labelled $P_7$-free bipartite graphs with $n$ vertices grows as $n^{\Theta(n)}$.
This resolves an open problem posed by Allen \cite{Peter}, and
completes the description of speeds of monogenic classes of bipartite graphs.
Our solution is based on a new decomposition scheme of bipartite graphs,
which is of independent interest.
\end{abstract}
%%%%%%%%%%%%%%%%%%%%%%%%%%%%%%%%%%%%%%%%%%%%%%%%%
%%%%%%%%%%%%%%%%%%%%%%%%%%%%%%%%%%%%%%%%%%%%%%%%%
\section{Introduction}
%%%%%%%%%%%%%%%%%%%%%%%%%%%%%%%%%%%%%%%%%%%%%%%%%
%%%%%%%%%%%%%%%%%%%%%%%%%%%%%%%%%%%%%%%%%%%%%%%%%

A {\it graph property} is an infinite class of graphs closed under isomorphism. 
A property is {\it hereditary} if it is closed under taking induced subgraphs.
The number of $n$-vertex labelled graphs in a property $X$ is known as 
the {\it speed} of $X$ and is denoted by $X_n$.

According to Ramsey's Theorem, there exist precisely two minimal hereditary properties: the complete graphs and the edgeless graphs. 
In both cases, the speed is obviously $X_n=1$. On the other extreme, lies the set of all simple graphs, in which case the speed is $X_n=2^{\binom{n}{2}}$.
Between these two extremes, there are uncountably many other hereditary properties and their speeds have been extensively studied, 
originally in the special case of a single forbidden subgraph, and more recently in general. For example,
Erd\H os et al. \cite{EKR76} and Kolaitis et al. \cite{KPR87} studied $K_r$-free graphs, Erd{\H{o}}s et al. \cite{EFR86} studied properties where
a single graph is forbidden as a subgraph (not necessarily induced), and Pr\"omel and Steger obtained a
number of results \cite{PS1,PS2,PS3} for properties defined by a single forbidden induced subgraph. This line
of research culminated in a breakthrough result stating that for every hereditary property $X$ different from
the set of all finite graphs,
\begin{equation}
\lim_{n\to\infty}\frac{\log_2 X_n}{\binom{n}{2}}=1-\frac{1}{k(X)},
\end{equation}
where $k(X)$ is a natural number called the {\it index} of $X$. To define this notion, let us denote by ${\cal E}_{i,j}$ the class
of graphs whose vertices can be partitioned into at most $i$ independent sets and $j$ cliques. In particular,
${\cal E}_{2,0}$ is the class of bipartite graphs, ${\cal E}_{0,2}$ is the class of co-bipartite (i.e. complements of bipartite) graphs and ${\cal E}_{1,1}$ is the class of split graphs. 
Then $k(X)$ is the largest $k$ such that $X$ contains ${\cal E}_{i,j}$ with $i+j = k$. This result was obtained independently by Alekseev \cite{Ale92} and Bollob\' as
and Thomason \cite{BT95,BT97} and is known nowadays as the Alekseev-Bollob\'as-Thomason Theorem (see e.g. \cite{almost}). 
%This theorem describes the asymptotic structure of hereditary properties of high speed, more precisely, of index $k(X)>1$. 
%
This theorem characterizes hereditary properties of high speed, i.e. properties of index $k(X)>1$. 
The asymptotic structure of these properties was studied in \cite{almost}.
For properties of index 1, known as {\it unitary classes}, these results are useless, which is unfortunate, because 
the family of unitary classes contains a variety of properties of theoretical or practical importance, such as
line graphs, interval graphs, permutation graphs, threshold graphs, forests, planar graphs
and, even more generally, all proper minor-closed graph classes \cite{small}, all classes of graphs
of bounded vertex degree, of bounded tree- and clique-width \cite{Allen}, etc.

A systematic study of hereditary properties of low speed was initiated by Scheinerman and Zito in \cite{low}. 
In particular, they distinguished the first four lower layers in the family of unitary classes:
constant (classes $X$ with $X_n = \Theta(1)$), polynomial ($X_n = n^{\Theta(1)})$, 
exponential ($X_n = 2^{\Theta(n)})$ and factorial ($X_n = n^{\Theta(n)})$. 
Independently, similar results have been obtained by Alekseev in \cite{Ale97}.
Moreover, Alekseev described the set of minimal classes in all the four lower layers and the 
asymptotic structure of properties in the first three of them. 
A more detailed description of the polynomial and exponential layers was obtained by Balogh, Bollob\'as and Weinreich in \cite{BBW00}.
However, the factorial layer remains largely unexplored and the asymptotic structure is known only 
for properties at the bottom of this layer, below the Bell numbers \cite{BBW00, Bell}. 
On the other hand, the factorial properties constitute the core of the unitary family, as all the interesting classes mentioned above (and many others) are factorial.
To simplify the study of the factorial properties, we proposed in  \cite{conjecture} the following conjecture.

\medskip
\noindent
{\bf Conjecture on factorial properties}. {\it A hereditary graph property $X$ is factorial if and only if the fastest of the following three
properties is factorial: bipartite graphs in $X$, co-bipartite graphs in $X$, split graphs in $X$}.

\medskip
%\noindent
To justify this conjecture we observe that if in the text of the conjecture we replace the word ``factorial'' by any of the
lower layers (constant, polynomial or exponential), then the text becomes a valid statement. Also, the ``only if'' part of the
conjecture is true, because all minimal factorial classes are subclasses of bipartite, co-bipartite and split graphs. 
Finally, in \cite{conjecture} this conjecture was verified for all hereditary classes defined by forbidden induced subgraphs with at most 4 vertices.

The above conjecture reduces the question of characterizing the factorial layer from the family of all hereditary
properties to those which are bipartite, co-bipartite and split. Taking into account the obvious relationship between
bipartite, co-bipartite and split graphs, this question can be further reduced to hereditary properties of bipartite graphs only.
However, even with this restriction a full characterization of the factorial layer remains a challenging open
problem. In \cite{Peter}, Allen studied this problem for classes of bipartite graphs defined by a single forbidden induced {\it bipartite} subgraph. 
We call such classes monogenic.
Allen characterized all monogenic classes of bipartite graphs according to their speeds, with one exception: the class of $P_7$-free bipartite graphs.
In the present paper, we complete this characterization by showing that  the class of $P_7$-free bipartite graphs is factorial. 

We prove the main result of the paper in two steps. First, in Section~\ref{sec:scheme} we introduce a decomposition scheme that provides a factorial upper bound, 
and then in Section~\ref{sec:P7} we apply this scheme to $P_7$-free bipartite graphs. All preliminary information related to the topic of the paper can be found 
in Section~\ref{sec:prel}.

%%%%%%%%%%%%%%%%%%%%%%%%%%%%%%%%%%%%%%%%%%%%%%%%%
%%%%%%%%%%%%%%%%%%%%%%%%%%%%%%%%%%%%%%%%%%%%%%%%%
\section{Preliminaries}
\label{sec:prel}
%%%%%%%%%%%%%%%%%%%%%%%%%%%%%%%%%%%%%%%%%%%%%%%%%
%%%%%%%%%%%%%%%%%%%%%%%%%%%%%%%%%%%%%%%%%%%%%%%%%
We study simple labelled graphs, i.e. undirected graphs without loops and multiple edges with vertex set $\{1,2,\ldots,n\}$ for some natural $n$. 
The vertex set and the edge set of a graph $G$ is denoted by $V(G)$ and $E(G)$, respectively.
As usual, $P_n$ is a chordless path and $K_n$ is a complete graph with $n$ vertices.  
For a subset $A \subseteq V(G)$, we denote by $N_G(A)$ the neighbourhood of $A$ in $G$, i.e. the set of vertices of $G$ outside of $A$ 
that have at least one neighbour in $A$. If $A=\{a\}$, we write $N_G(a)$ to simplify the notation.  
A vertex $x \in V(G) \setminus A$ \textit{is complete to} $A$ if $A\subseteq N_G(x)$, and $x$
\textit{is anticomplete to} $A$ if $A\cap N_G(x)=\emptyset$. Similarly, two disjoint subsets $A$ and $B$ of $V(G)$ are {\it complete} to each other if 
every vertex of $B$ is complete to $A$, and they are {\it anticomplete} to each other if 
every vertex of $B$ is anticomplete to $A$.

We denote the union of two disjoint graphs $G$ and $H$ by $G+H$. The join of $G$ and $H$ is obtained from  $G+H$ by adding all possible edges between $G$ and $H$.

In a graph, an {\it independent set} is a subset of pairwise non-adjacent vertices, and a {\it clique} is a subset of pairwise adjacent vertices.
A {\it clique cutset} in a connected graph is a clique whose removal disconnects the graph. 

A graph is {\it bipartite} if its vertex set can be partitioned into at most two independent sets. 
When we talk about bipartite graphs, we assume that each graph is given together with a bipartition of its vertex set
into two parts (independent sets), say left and right, and we denote a bipartite graph with parts $U$ and $W$ by $G = (U, W, E)$,
where $E$ stands for the set of edges. The bipartite complement of a bipartite graph $G = (U, W, E)$ is the bipartite
graph $\overline{G^b} = (U, W, E')$, where two vertices $u \in U$ and $w \in W$ are adjacent in $G$ if and only if they are non-adjacent in $\overline{G^b}$.

Given a subset $A \subseteq V(G)$, we denote by $G[A]$ the subgraph of $G$ induced by $A$. 
If a graph $G$ does not contain an induced subgraph isomorphic to a graph $H$, then we say that $G$ is $H$-free and call $H$ a forbidden induced subgraph for $G$.
It is well known that a class of graphs is hereditary if and only if it can be characterized by means of minimal forbidden induced subgraphs. 
In the present paper, we study bipartite graphs which are $P_7$-free. Observe that the bipartite complement of a $P_7$ is a $P_7$ again, and hence 
the bipartite complement of a $P_7$-free bipartite graph is also $P_7$-free.

% Let $A \subseteq V(G)$ be a vertex set of a graph $G$. A vertex $x \in V(G) \setminus A$ 
% \textit{is complete to} $A$ if $x$ is adjacent to every vertex in $A$, and $x$
% \textit{is anticomplete to} $A$ if $x$ has no neighbours in $A$.
% A set $B$ disjoint form $A$ \textit{is complete (resp. anticomplete) to} $A$ if every vertex in 
% $B$ is complete (resp. anticomplete) to $A$.

%%%%%%%%%%%%%%%%%%%%%%%%%%%%%%%%%%%%%%%%%%%%%%%%%
%%%%%%%%%%%%%%%%%%%%%%%%%%%%%%%%%%%%%%%%%%%%%%%%%
\section{Chain decomposition of bipartite graphs}
\label{sec:scheme}
%%%%%%%%%%%%%%%%%%%%%%%%%%%%%%%%%%%%%%%%%%%%%%%%%
%%%%%%%%%%%%%%%%%%%%%%%%%%%%%%%%%%%%%%%%%%%%%%%%%

In this section, we introduce a decomposition scheme generalizing some of the previously known decompositions of bipartite graphs, such as canonical decomposition \cite{canonical}.
We call our decomposition {\it chain decomposition} and formally define it in Section~\ref{sec:decomp}. Then in Section~\ref{sec:dec_scheme} we describe 
the scheme, i.e. a decomposition tree based on chain decomposition. This scheme provides a factorial representation 
for a large class of bipartite graphs, which we call {\it chain-decomposable}. We prove a factorial upper bound for this class in Section~\ref{sec:numOfGraphs}.

\subsection{Chain decomposition}
\label{sec:decomp}

Let $G=(U,W,E)$ be a bipartite graph and $k>0$ a natural number. We say that $G$ admits a $k$-\textit{\decomp decomposition} if
$U$ can be partitioned into subsets $A_1, \ldots, A_k, C_1, \ldots, C_k$ and $W$ can be partitioned into subsets $B_1, \ldots, B_k, D_1, \ldots, D_k$
in such a way that
\begin{itemize}
	\item for every $i \leq k-1$, the sets $A_i, B_i,C_i,D_i$ are non-empty. For $i=k$, at least one of the sets $A_i, B_i,C_i,D_i$ must be non-empty.
	\item for each $i =1, \ldots, k$, 
\begin{itemize}
\item every vertex of $B_i$ has a neighbour in $A_i$; 
\item every vertex of $D_i$ has a neighbour in $C_i$;
\end{itemize}
	\item for each $i = 2, \ldots, k-1$, 
\begin{itemize}
\item every vertex of  $A_i$ has a non-neighbour in $B_{i-1}$;
\item	every vertex of $C_i$ has a non-neighbour in $D_{i-1}$;
\end{itemize}		
	\item for each $i = 1, \ldots, k$, 
\begin{itemize}
\item the set $A_i$ is anticomplete to $B_j$ for $j > i$ and
	 is complete to $B_j$ for $j < i-1$; 
\item the set $C_i$ is anticomplete to $D_j$ for $j > i$ and
	is complete to $D_j$ for $j < i-1$;
	\end{itemize}
	\item for each $i = 1, \ldots, k$, 
\begin{itemize}
\item the set $A_i$ is complete to $D_j$ for $j < i$, and is anticomplete to
	$D_j$ for $j \geq i$; 
\item	the set $C_i$ is complete to $B_j$ for $j < i$, and is anticomplete to $B_j$ for $j \geq i$.
\end{itemize}
\end{itemize}
We denote $A=A_1\cup\ldots\cup A_k$, $B=B_1\cup\ldots\cup B_k$, $C=C_1\cup\ldots\cup C_k$, $D=D_1\cup\ldots\cup D_k$ and refer to  a $k$-\textit{\decomp decomposition} of $G$ 
either  as $[(A_1, \ldots, A_k)(B_1, \ldots, B_k) (C_1, \ldots, C_k) (D_1, \ldots, D_k)]$ (long-term notation) or, if no confusion arises, as $(A,B,C,D)$ (short-term notation).

We call the subgraphs $G[A\cup B]$ and $G[C\cup D]$ the \textit{components} of the decomposition.
Let us observe that the decomposition is symmetric with respect to their components, i.e. 
if $(A,B,C,D)$ is a chain decomposition, then $(C,D,A,B)$ also is a chain decomposition of $G$.
However, it is not necessarily symmetric with respect to $U$ and $W$.  In the definition of chain decomposition, we fix both the bipartition of $G$ and the order of its parts. 
By changing the order, we may obtain another chain decomposition of $G$, and we refer to these two decompositions as {\it left} and {\it right}, respectively.

We will say that $G=(U,W,E)$ admits a \decomp decomposition if it admits a 
$k$-\decomp decomposition (left or right) for
some natural $k>0$. Note that if $G$ admits a $1$--\decomp decomposition 
$[(A_1)(B_1)(C_1)(D_1)]$, then $G$ is the disjoint union of $G[A_1 \cup B_1]$ and $G[C_1 \cup D_1]$.
In particular, $G$ is disconnected.

\begin{lemma}\label{lem:reconstruct}
Let $G_1=G[A\cup B]$ and $G_2=G[C\cup D]$ be two components of  a $k$-\decomp decomposition	of $G$. 
Then $G$ can be reconstructed from $G_1$ and $G_2$ with the help of $k$, $A_1$ and $C_1$.
\end{lemma}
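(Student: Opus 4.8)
The plan is to recover $G$ from $G_1 = G[A\cup B]$ and $G_2 = G[C\cup D]$ by rebuilding the edges that run between the two components, since $G_1$ and $G_2$ already give us all edges inside each component. So the only thing to determine is the bipartite adjacency between $A$ and $D$ and between $C$ and $B$. By the last bullet of the definition, $A_i$ is complete to $D_j$ for $j<i$ and anticomplete to $D_j$ for $j\geq i$; symmetrically $C_i$ is complete to $B_j$ for $j<i$ and anticomplete to $B_j$ for $j\geq i$. Thus once we know the partitions $A=A_1\cup\cdots\cup A_k$, $B=B_1\cup\cdots\cup B_k$, $C=C_1\cup\cdots\cup C_k$, $D=D_1\cup\cdots\cup D_k$, the cross edges are completely forced. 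So the real content is: \emph{the partition of $V(G_1)$ into the layers $A_i,B_i$ and the partition of $V(G_2)$ into the layers $C_i,D_i$ can be reconstructed internally from $G_1$, $G_2$, $k$, $A_1$ and $C_1$.}

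First I would reconstruct the layers of $G_1$. The idea is to peel off layers one at a time starting from $A_1$ (which is given). Using the structural conditions restricted to the component $G[A\cup B]$ — namely that $A_i$ is anticomplete to $B_j$ for $j>i$ and complete to $B_j$ for $j<i-1$, that every vertex of $B_i$ has a neighbour in $A_i$, and that every vertex of $A_i$ (for $2\leq i\leq k-1$) has a non-neighbour in $B_{i-1}$ — one can characterize $B_1$ as exactly the set of vertices of $W\cap V(G_1)$ having a neighbour in $A_1$: indeed $B_j$ for $j\geq 2$ is anticomplete to $A_1$ (taking $i=1<j$ in the anticomplete condition after swapping the roles via the $j>i$ clause), and every vertex of $B_1$ has a neighbour in $A_1$. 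Having identified $A_1$ and $B_1$, I would argue that $A_2$ is precisely the set of vertices of $A\setminus A_1$ with a non-neighbour in $B_1$, using the ``non-neighbour in $B_{i-1}$'' condition for $i=2$ together with the fact that $A_j$ is complete to $B_1$ for $j\geq 3$ (the ``complete to $B_j$ for $j<i-1$'' clause). Then $B_2$ is the set of remaining vertices of $W\cap V(G_1)$ having a neighbour in $A_2$, and so on; at each stage removing the already-identified layers and repeating the same local rule. Continuing for $k$ rounds — which is why $k$ is needed as part of the input — recovers $A_1,B_1,\ldots,A_k,B_k$. The identical argument applied to $G_2$ with the seed $C_1$ recovers $C_1,D_1,\ldots,C_k,D_k$.

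Finally, with both layer-partitions in hand, I would re-add to $G_1 + G_2$ all edges between $A_i$ and $D_j$ for $j<i$ and all edges between $C_i$ and $B_j$ for $j<i$; by the completeness/anticompleteness clauses this produces exactly $G$.

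The main obstacle is the inductive peeling argument: I need to verify carefully that at each step the ``new'' layer is cleanly separated from the deeper, not-yet-identified layers, i.e.\ that no later layer $A_j$ ($j\geq i+1$) can be mistaken for $A_i$ and no later $B_j$ can be mistaken for $B_i$. This requires checking the boundary cases $i=1$, $i=k-1$ and $i=k$ separately (the ``non-neighbour in $B_{i-1}$'' condition is only guaranteed for $2\leq i\leq k-1$, and for $i=k$ some of the four sets may be empty), and confirming that the knowledge of $k$ resolves the ambiguity at the last step. Once the layer recovery is established, the reconstruction of cross edges and the identification $G = (G_1+G_2) + (\text{cross edges})$ are immediate from the definition.
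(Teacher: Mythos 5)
Your proposal is correct and follows essentially the same route as the paper's proof: seed the peeling with $A_1$ (resp.\ $C_1$), take $B_1=N_{G_1}(A_1)$, then alternately identify $A_i$ as the remaining vertices of $A$ having a non-neighbour in $B_{i-1}$ and $B_i$ as the remaining neighbours of $A_i$, after which the cross edges are forced by the completeness/anticompleteness clauses. The boundary issue you flag is handled exactly as you suggest (and as the paper does): the ``non-neighbour in $B_{i-1}$'' test is used only for $2\le i\le k-1$, and knowing $k$ lets the last layer $A_k,B_k$ be taken simply as everything that remains.
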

\begin{proof}
To reconstruct $G$, we need to determine the adjacencies between the vertices of $G_1$ and $G_2$.
For this, we need to properly partition each of the sets $A,B,C,D$ into (at most) $k$ subsets.
	We will show how to find the partitions of $A$ and $B$; the partitions of $C$ and $D$ can be found 
	in a similar way. 

By definition, every vertex in $B_1$ has a neighbour in $A_1$, and
	$A_1$ is anticomplete to $B_j$ for $j > 1$. Therefore, $B_1 = N_{G_1}(A_1)$, and if $k=1$, then
	we are done. Assume now that $k>1$ and we have identified $A_1, B_1, \ldots, A_{i-1}, B_{i-1}$.
 If $i=k$, then $A_i = A - (A_1\cup\ldots\cup A_{i-1})$
	and $B_i = B - (B_1\cup\ldots\cup B_{i-1})$. If $2 \leq i \leq k-1$, then by definition
	every vertex in $A_i$ has a non-neighbour in $B_{i-1}$, while for $j > i$ the set $A_j$ is complete to $B_{i-1}$. 
Therefore, $A_i$ is the set of vertices in 
	$A - (A_1\cup\ldots\cup A_{i-1})$ that have at least one non-neighbour in $B_{i-1}$.
	Also, every vertex in $B_i$ has a neighbour in $A_i$, while for $j > i$ the set $B_j$ is anti-complete to $A_i$.
Therefore, $B_i = N_{G_1}(A_i) - (B_1\cup\ldots\cup B_{i-1})$.
\end{proof}

% \noindent
% \textbf{Remark.} TODO: how to store information about $A_1$, $C_1$...

%%%%%%%%%%%%%%%%%%%%%%%%%%%%%%%%%%%%%%%%%%%%%%%%%
%%%%%%%%%%%%%%%%%%%%%%%%%%%%%%%%%%%%%%%%%%%%%%%%%
\subsection{Decomposition scheme}
\label{sec:dec_scheme}
%%%%%%%%%%%%%%%%%%%%%%%%%%%%%%%%%%%%%%%%%%%%%%%%%
%%%%%%%%%%%%%%%%%%%%%%%%%%%%%%%%%%%%%%%%%%%%%%%%%

Let $G=(U,W,E)$ be a bipartite graph and $\overline{G^b}$ the bipartite complement of $G$.
If $G$ or $\overline{G^b}$ admits a chain decomposition, we split the graph into two decomposition 
components and proceed with the components recursively. If this process can decompose $G$ into one-vertex graphs, 
we call $G$ {\it chain-decomposable} or {\it totally decomposable by chain decomposition}. 

If $G$ is chain-decomposable, the recursive procedure for decomposing $G$ into single vertices can be described by a rooted binary decomposition tree $\mathcal{T}(G)$.
Below we describe the rules to construct the tree. To simplify the description, we distinguish between
1-chain decomposition (in which case the graph is disconnected) and $k$-chain decomposition for $k\ge 2$. In the second case, 
we introduce two \textit{marker vertices} $v_1$ and $v_2$ that are needed to properly reconstruct the graph from its two decomposition components.
% We assume that every vertex is described by its label and one more bit showing which of the two parts $U$ and $W$ it belongs to. 
% This allows, in particular, to distinguish between left and right decomposition by means of marker vertices.

\begin{enumerate}
	\item If $G$ has only one vertex $v$, then $\mathcal{T}(G)$ consists of one node marked by $(v,f)$, 
where $f$ is a binary flag showing which of the parts $U$ and $W$ vertex $v$ belongs to.

	\item If $G$ is disconnected, then $G = G_1 + G_2$ for some induced subgraphs $G_1$ and $G_2$ of $G$, and the tree $\mathcal{T}(G)$ consists of a root marked by UNION
	and linked to the roots of $\mathcal{T}(G_1)$ and $\mathcal{T}(G_2)$. 
	
	\item If $H=\overline{G^b}$ is disconnected, then $H = H_1 + H_2$, and $\mathcal{T}(G)$
	consists of a root marked by CO-UNION and linked to the roots of $\mathcal{T}(H_1)$ 
	and $\mathcal{T}(H_2)$.
	
	\item If $G$ admits a $k$-\decomp decomposition 
	$$(A,B,C,D)=[(A_1, \ldots, A_k)(B_1, \ldots, B_k)(C_1, \ldots, C_k)(D_1, \ldots, D_k)]$$ with $k \geq 2$, then
	we let $$G_1 = G[A\cup B\cup \{ v_1 \}] \mbox{ and }
	G_2 = G[C\cup D\cup \{ v_2 \}],$$ where $v_1$ and $v_2$ are arbitrary
	vertices from $D_1$ and $B_1$, respectively. In this case, $\mathcal{T}(G)$ consists of a root
	marked by the tuple $(k,v_1,v_2,\mbox{CHAIN})$ 
	%(where $f$ is a binary flag showing whether the decomposition left or right)
	and linked to the roots of $\mathcal{T}(G_1)$ and 
	$\mathcal{T}(G_2)$.

	\item If $H=\overline{G^b}$ admits a $k$-\decomp decomposition 
	$$(A,B,C,D)=[(A_1, \ldots, A_k)(B_1, \ldots, B_k)(C_1, \ldots, C_k)(D_1, \ldots, D_k)]$$ with $k \geq 2$, then
	we let $$H_1 = H[A\cup B\cup  \{ v_1 \}]\mbox{  and }
	H_2 = H[C\cup D\cup \{ v_2 \}],$$ where $v_1$ and $v_2$ are arbitrary
	vertices from $D_1$ and $B_1$, respectively. In this case $\mathcal{T}(G)$ consists of a root
	marked by the tuple $(k,v_1,v_2,\mbox{CO-CHAIN})$ 
	%(where $f$ is a binary flag showing whether the decomposition left or right) 
	and linked to the roots of 
	$\mathcal{T}(H_1)$ and $\mathcal{T}(H_2)$.
\end{enumerate} 

\begin{proposition}
A chain-decomposable graph $G$ can be reconstructed from its decomposition tree $\mathcal{T}(G)$.
\end{proposition}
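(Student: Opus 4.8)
The plan is to argue by induction on the structure of the tree $\mathcal{T}(G)$, i.e. on the number of its nodes (equivalently, on $|V(G)|$, since a chain-decomposable graph on $n$ vertices yields a tree with $n$ leaves). The base case is a single-vertex graph: a node marked $(v,f)$ determines the unique bipartite graph on the vertex $v$ with $v$ placed in the part indicated by the flag $f$. For the inductive step I would treat the four internal node types in turn, in each case invoking the induction hypothesis to recover the subtrees' graphs and then showing how the label at the root tells us how to glue them.

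For a UNION node, the subtrees are $\mathcal{T}(G_1)$ and $\mathcal{T}(G_2)$; by induction we recover $G_1$ and $G_2$, and $G = G_1 + G_2$ with parts $U = U_1 \cup U_2$, $W = W_1 \cup W_2$, so $G$ is determined (here the left/right parts of each $G_i$ are carried by the flags, so the disjoint union respects the bipartition). The CO-UNION case is identical after passing to bipartite complements: we recover $H_1, H_2$, form $H = H_1 + H_2$, and set $G = \overline{H^b}$. For a CHAIN node labelled $(k, v_1, v_2, \mathrm{CHAIN})$, the subtrees yield $G_1 = G[A \cup B \cup \{v_1\}]$ and $G_2 = G[C \cup D \cup \{v_2\}]$. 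Deleting the marker $v_1$ from $G_1$ gives the component $G[A\cup B]$, and deleting $v_2$ from $G_2$ gives $G[C \cup D]$; these are precisely the two components of a $k$-chain decomposition of $G$. Now I apply Lemma~\ref{lem:reconstruct}: it says $G$ is reconstructible from its two components together with $k$, $A_1$ and $C_1$. The value $k$ is stored in the label, so the only remaining point is to recover $A_1$ and $C_1$. This is exactly what the markers are for: since $v_2 \in B_1$ and, by the definition of chain decomposition, $B_1 = N_{G_1}(A_1)$ with $A_1$ anticomplete to $B_j$ for $j>1$, the set $A_1$ is determined inside $G[A\cup B]$ as $N_{G[A\cup B]}(v_2)$'s "$A$-side preimage"; more precisely, running the reconstruction of Lemma~\ref{lem:reconstruct} we first need $B_1 = N_{G_1}(A_1)$, but here it is cleaner to note that $A_1$ is the set of vertices of $A$ adjacent to $v_2$ (because $C_1$ is complete to $B_1 \ni v_2$ while $C_j$ for $j \ge 2$ is anticomplete to $B_1$ — wait, that gives $C_1$; symmetrically $v_1 \in D_1$ recovers $A_1$ since $A_1$ is complete to $D_1$ and $A_j$ for $j\ge 2$ is anticomplete to $D_1$). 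So $A_1 = \{a \in A : a \sim v_1 \text{ in } G_2\}$ restricted appropriately, and $C_1 = \{c \in C : c \sim v_2\}$; having $A_1$, $C_1$ and $k$, Lemma~\ref{lem:reconstruct} finishes the job. The CO-CHAIN case is the same argument applied to $H = \overline{G^b}$, followed by taking the bipartite complement back.

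The main obstacle — really the only delicate point — is verifying that the markers $v_1 \in D_1$ and $v_2 \in B_1$ genuinely suffice to pin down $A_1$ and $C_1$ inside the (marker-free) components, since Lemma~\ref{lem:reconstruct} is stated as taking $A_1$ and $C_1$ as given. The key observation making this work is the cross-adjacency part of the definition of chain decomposition: $A_i$ is complete to $D_j$ for $j < i$ and anticomplete to $D_j$ for $j \ge i$, so a single vertex $v_1 \in D_1$ has as its neighbourhood among $A$ exactly $\{a : a \in A_i, i > 1\} = A \setminus A_1$; hence $A_1 = A \setminus N_{G_1}(v_1)$. Symmetrically $C_1 = C \setminus N_{G_2}(v_2)$ using $v_2 \in B_1$ and the rule that $C_i$ is complete to $B_j$ for $j<i$ and anticomplete for $j \ge i$. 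I would also remark that the special small-$k$ behaviour (the $1$-chain case is handled by UNION/CO-UNION, not by the CHAIN rule, so $k \ge 2$ whenever a CHAIN or CO-CHAIN marker appears, guaranteeing $B_1$ and $D_1$ are non-empty and the markers exist) is consistent with the tree-construction rules, and that bipartite-complementation is an involution on bipartite graphs with a fixed ordered bipartition, so the CO- cases invert cleanly.
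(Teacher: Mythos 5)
Your proof is correct and essentially identical to the paper's: both recover the two decomposition components as $G_1-v_1$ and $G_2-v_2$, extract $A_1=A\setminus N_{G_1}(v_1)$ and $C_1=C\setminus N_{G_2}(v_2)$ from the non-neighbourhoods of the marker vertices, and then invoke Lemma~\ref{lem:reconstruct} (the paper phrases the recursion as a DFS post-order traversal rather than an explicit induction). Two small blemishes: the mid-argument sentence asserting that $C_1$ is complete to $B_1$ and that $A_1=\{a\in A : a\sim v_1 \text{ in } G_2\}$, $C_1=\{c\in C : c\sim v_2\}$ has the adjacencies reversed (and places $v_1$ in the wrong component) and should be dropped in favour of the correct complement formulas you state in your final paragraph, and the parenthetical claim that $\mathcal{T}(G)$ has exactly $n$ leaves is false, since each marker vertex also occurs as an ordinary leaf of the other subtree (which is precisely why the paper bounds the number of nodes via $2$-decomposition trees); inducting on the number of tree nodes, as you also suggest, avoids any issue.
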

% In the later two cases we say that $v_1$ is the \textit{marker vertex} in $G_1$ (respectively, in $H_1$),
% and $v_2$ is the marker vertex in $G_2$ (respectively, in $H_2$).
\begin{proof}
Each node $X$ of $\mathcal{T}(G)$ corresponds to a subgraph $F_X$ of $G$ induced by the leaves of the subtree of $\mathcal{T}(G)$ 
rooted at $X$. In particular, the root of $\mathcal{T}(G)$ corresponds to $G$. 
We reconstruct $G$ by traversing $\mathcal{T}(G)$ in DFS post-order.

If $X$ is marked by UNION or CO-UNION, the reconstruction of $F_X$ from 
the graphs corresponding to the children of $X$ is obvious.

Now assume that $X$ is marked by $(k, v_1, v_2, \mbox{CHAIN})$ and let $F_1=(U_1,W_1,E_1)$ and $F_2=(U_2,W_2,E_2)$ be the graphs corresponding to the children of $X$. 
Then the two decomposition components of $F_X$ are $F_1 - v_1$ and $F_2 - v_2$\footnote{Technical remark: 
since $v_1$ and $v_2$ belong to both $F_1$ and $F_2$, to distinguish between them, we define $F_1$ to be the graph containing a vertex with the smallest label different from $v_1$ and $v_2$.}.
By definition of chain decomposition, $D_1$ is anti-complete to $A_1$ and complete to $A_j$ for all $j>1$. 
Therefore, $A_1= U_1 - N_{F_1}(v_1)$. Similarly, $C_1=U_2 - N_{F_2}(v_2)$. This information enables us to reconstruct $F_X$ by  Lemma~\ref{lem:reconstruct}.

If $X$ is marked by $(k,v_1,v_2,\mbox{CO-CHAIN})$, the graph $F_X$ can be reconstructed in a similar way.
\end{proof}
%%%%%%%%%%%%%%%%%%%%%%%%%%%%%%%%%%%%%%%%%%%%%%%%%
%%%%%%%%%%%%%%%%%%%%%%%%%%%%%%%%%%%%%%%%%%%%%%%%%
\subsection{The number of chain-decomposable graphs} 
\label{sec:numOfGraphs}
%%%%%%%%%%%%%%%%%%%%%%%%%%%%%%%%%%%%%%%%%%%%%%%%%
%%%%%%%%%%%%%%%%%%%%%%%%%%%%%%%%%%%%%%%%%%%%%%%%%

In this section, we show that the number of $n$-vertex chain-decomposable bipartite graphs grows as $n^{\Theta(n)}$. 
% A lower bound follows from the fact that the class of chain decomposable graphs contains all the graphs
% of maximum degree at most one, and there are at least $\lfloor n/2 \rfloor ! = n^{\Omega(n)}$ such graphs 
% on $n$ vertices (see e.g. \cite{alekseev97}).
To obtain an upper bound, we estimate the number of decomposition trees for a chain decomposable graph. 
To this end, we start by estimating the number of nodes in these trees.

%%%%%%%%%%%%%%%%%%%%%%%%%%%%%%%%%%%%%%%%%%%%%%%%%
%%%%%%%%%%%%%%%%%%%%%%%%%%%%%%%%%%%%%%%%%%%%%%%%%
\subsubsection{On the number of nodes in decomposition trees}
\label{sec:numOfNodes}
%%%%%%%%%%%%%%%%%%%%%%%%%%%%%%%%%%%%%%%%%%%%%%%%%
%%%%%%%%%%%%%%%%%%%%%%%%%%%%%%%%%%%%%%%%%%%%%%%%%

Let $k \geq 0$ be a constant. We say that a rooted binary tree $T$ is a \textit{$k$-decomposition tree} 
of an $n$-element set $A$, if every internal node of $T$ has \textit{exactly} two children
and every node $v$ of $T$ is assigned a subset $S(v) \subseteq A$ in such a way that:
\begin{enumerate}
	\item $S(v) = A$ if and only if $v$ is a root;
	\item $|S(v)| = 1$ if and only if  $v$ is a leaf; 
	\item If $v_1$ and $v_2$ are children of $v$, then
	\begin{enumerate}
		\item $S(v_1) \cup S(v_2) = S(v)$;
		\item $|S(v)| \leq |S(v_1)| + |S(v_2)| \leq |S(v)| + k$;
		\item each of $S(v_1)$ and $S(v_2)$ has a private element, i.e. $S(v_1)-S(v_2)$ and $S(v_2)-S(v_1)$ are both non-empty. 
	\end{enumerate}
\end{enumerate}

The next lemma provides an upper bound on the number of \textit{leaves} in a $k$-decomposition tree
of an $n$-element set.

\begin{lemma}\label{lem:numLeaves}
	Let $t(n)$ be the maximum number of leaves in a $k$-decomposition tree of an $n$-element set.
	Then
	$$
		t(n) \leq 
		\begin{cases}
			2^{n-1}, & n \leq k; \\
			(n-k) 2^{k}, & n \geq k+1.
		\end{cases}
	$$
\end{lemma}
\begin{proof}
	Let $T$ be an arbitrary $k$-decomposition tree of an $n$-element set and $r$ be 
	the root of $T$
	with two children $v_1$ and $v_2$. Let us also denote $n_i = |S(v_i)|$ for $i=1,2$.
	We prove the statement by induction on $n$ using the trivial relation $t(n) \leq t(n_1) + t(n_2)$ and
	the basis $t(2) = 2$, which easily follows from the definition of $k$-decomposition tree.
	
	Fist, assume that $n \leq k$. Note that $n_1 \leq n-1$ and $n_2 \leq n-1$, as each of the sets 
	$S(v_1)$ and $S(v_2)$ has a private element. Then by induction 
	$t(n) \leq 2^{n_1-1} + 2^{n_2-1} \leq 2^{n-2} + 2^{n-2} = 2^{n-1}$.
	Now, let $n \geq k+1$. We have to analyze three cases:
	\begin{enumerate}
		\item $n_1 \leq k$ and $n_2 \leq k$. Then by induction 
		$$
			t(n) \leq 2^{n_1-1} + 2^{n_2-1} \leq 2^{k-1} + 2^{k-1} = 2^{k} \leq (n-k) 2^k.
		$$
		
		\item $n_1 \leq k$ and $n_2 \geq k+1$. Then by induction 
		$$
			t(n) \leq 2^{n_1-1} + (n_2-k) 2^{k} \leq 2^{k-1} + (n-1-k)2^{k} = (n - 1/2 - k)2^{k} \leq (n-k)2^k.
		$$
		
		\item $n_1 \geq k+1$ and $n_2 \geq k+1$. Then by induction
		$$
			t(n) \leq (n_1-k) 2^{k} + (n_2-k) 2^{k} = (n_1 + n_2 - 2k)2^k \leq (n-k)2^k,
		$$
		where the latter inequality follows from the fact that $n_1 + n_2 \leq n+k$ by the definition.
	\end{enumerate}
\end{proof}

\begin{corollary}\label{cor:k-decTrees}
	Let $T$ be a $k$-decomposition tree of an $n$-element set. Then $T$ has at most
	$2^n - 1$ nodes, if $n \leq k$, and at most $(n-k)2^{k+1}-1$ nodes, if $n \geq k+1$.
\end{corollary}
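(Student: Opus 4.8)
The plan is to reduce the bound on the total number of nodes to the bound on the number of leaves already established in Lemma~\ref{lem:numLeaves}. The key structural observation is that a $k$-decomposition tree is, by definition, a rooted binary tree in which \emph{every} internal node has exactly two children; such a tree is a full binary tree. For full binary trees there is the well-known identity that the number of internal nodes is exactly one less than the number of leaves. Hence, if $\ell$ denotes the number of leaves of $T$, the total number of nodes of $T$ equals $\ell + (\ell - 1) = 2\ell - 1$.

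First I would recall (or prove in one line by induction on the number of leaves, or by a handshake-style edge count) the identity relating internal nodes and leaves in a full binary tree, so that the reduction ``total nodes $= 2\ell - 1$'' is justified. Then I would simply substitute the bound on $\ell$ from Lemma~\ref{lem:numLeaves}: in the regime $n \le k$ we have $\ell \le 2^{n-1}$, giving at most $2\cdot 2^{n-1} - 1 = 2^n - 1$ nodes; in the regime $n \ge k+1$ we have $\ell \le (n-k)2^k$, giving at most $2\,(n-k)2^k - 1 = (n-k)2^{k+1} - 1$ nodes. This yields exactly the two claimed bounds.

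I do not expect a real obstacle here: the only thing to be careful about is making the ``full binary tree'' hypothesis explicit (every internal node has exactly two children, as required in the definition of a $k$-decomposition tree) before invoking the leaves-versus-internal-nodes identity, and then applying Lemma~\ref{lem:numLeaves} in the correct case. Everything else is a one-step arithmetic substitution.
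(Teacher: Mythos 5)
Your proposal is correct and follows essentially the same route as the paper: both reduce the node count to the leaf bound of Lemma~\ref{lem:numLeaves} via the fact that a tree with $s$ leaves whose internal nodes have (at least) two children has at most $2s-1$ nodes, and then substitute the two cases. Your slightly sharper observation that the tree is full binary (so the count is exactly $2\ell-1$) changes nothing of substance.
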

\begin{proof}
	The corollary follows from Lemma~\ref{lem:numLeaves} and the fact that a tree with $s$ leaves,
	in which every internal node has at least two children, has at most $2s-1$ nodes.
\end{proof}

In order to use Corollary~\ref{cor:k-decTrees} for estimating the number of nodes in a decomposition
tree $\mathcal{T}(G)$ of an $n$-vertex chain-decomposable graph $G$, it suffices to observe that $\mathcal{T}(G)$ is a 2-decomposition tree of $V(G)$.

\begin{theorem}\label{th:decTreeNodes}
	Let $G$ be an $n$-vertex chain-decomposable bipartite graph with $n \geq 3$. 
	Then $\mathcal{T}(G)$ has at most $8n-17$ nodes.
\end{theorem}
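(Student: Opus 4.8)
The plan is to verify that the decomposition tree $\mathcal{T}(G)$ is a $2$-decomposition tree of the $n$-element set $V(G)$; the theorem then follows immediately from Corollary~\ref{cor:k-decTrees} applied with $k=2$, since for $n\geq 3=k+1$ it yields at most $(n-2)\cdot 2^{3}-1=8n-17$ nodes.

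To carry out the verification I would reuse the labelling of $\mathcal{T}(G)$ from the proof of the preceding Proposition: to each node $X$ assign $S(X):=V(F_X)$, where $F_X$ is the subgraph of $G$ induced by the leaves of the subtree of $\mathcal{T}(G)$ rooted at $X$. Every internal node of $\mathcal{T}(G)$ is of type UNION, CO-UNION, CHAIN or CO-CHAIN, so it has exactly two children, as required. The root corresponds to $G$, whence $S(\mathrm{root})=V(G)$; for a non-root node $X$ with parent $Y$ one has $S(X)\subsetneq S(Y)\subseteq V(G)$, because at each internal node both child sets are \emph{proper} subsets of the parent set (for UNION/CO-UNION this is because the complementary side is non-empty; for a CHAIN node, with $F_Y$ having chain decomposition $(A,B,C,D)$ and markers $v_1\in D_1$, $v_2\in B_1$, the set $S(X_1)=A\cup B\cup\{v_1\}$ misses the non-empty set $C_1$, and symmetrically $S(X_2)=C\cup D\cup\{v_2\}$ misses $A_1$). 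This establishes condition~1. Condition~2 holds because the leaves are exactly the one-vertex graphs of rule~1, whereas any internal node $X$ has $|S(X)|\geq 2$ (for UNION/CO-UNION both sides are non-empty; for CHAIN/CO-CHAIN $S(X)\supseteq A_1\cup B_1$ with $A_1,B_1\neq\emptyset$, using $k\geq 2$).

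The substantive point is condition~3. For a UNION or CO-UNION node with children $X_1,X_2$ the sets $S(X_1),S(X_2)$ are non-empty, disjoint, and cover $S(X)$, so 3(a), 3(c) and 3(b) (with $|S(X_1)|+|S(X_2)|=|S(X)|$) all hold trivially. The interesting case is a CHAIN node $X$ (the CO-CHAIN case being the same applied to $\overline{G^b}$): writing $(A,B,C,D)$ for the chain decomposition of $F_X$ and $v_1\in D_1$, $v_2\in B_1$ for the markers, we have $S(X_1)=A\cup B\cup\{v_1\}$ and $S(X_2)=C\cup D\cup\{v_2\}$. Since $A\cup B$ is disjoint from $C\cup D$ while $v_1\in D$ and $v_2\in B$, it follows that $S(X_1)\cup S(X_2)=A\cup B\cup C\cup D=S(X)$, giving 3(a), and that $S(X_1)\cap S(X_2)=\{v_1,v_2\}$, so $|S(X_1)|+|S(X_2)|=|S(X)|+2$, which exactly attains the slack $k=2$ allowed in 3(b). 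Finally $A_1\subseteq S(X_1)\setminus S(X_2)$ and $C_1\subseteq S(X_2)\setminus S(X_1)$ are both non-empty (again since $k\geq 2$), which is 3(c).

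With all the conditions in place, $\mathcal{T}(G)$ is a $2$-decomposition tree of $V(G)$, and Corollary~\ref{cor:k-decTrees} closes the argument. I do not expect any deep difficulty; the main care needed is the bookkeeping around the two marker vertices, namely confirming that each CHAIN/CO-CHAIN node contributes exactly two extra elements (one per marker) — so that $k=2$ is the correct constant — and that the private-element condition 3(c) still holds, which is precisely where the non-emptiness of $A_1$ and $C_1$, guaranteed by the definition of a $k$-chain decomposition for $k\geq 2$, is used.
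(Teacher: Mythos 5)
Your proof is correct and follows exactly the paper's route: the paper likewise observes that $\mathcal{T}(G)$ is a $2$-decomposition tree of $V(G)$ and then applies Corollary~\ref{cor:k-decTrees} with $k=2$ to get $(n-2)2^{3}-1=8n-17$. The only difference is that the paper leaves the verification of the three conditions implicit, whereas you spell out the bookkeeping with the marker vertices and the non-emptiness of $A_1$ and $C_1$.
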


% \noindent
% \textbf{Remark.} The definition of $k$-decomposition tree of an $n$-element set can be extended
% in the obvious way to rooted trees whose internal nodes has \textit{at least} two children. 
% Moreover, Lemma \ref{lem:numLeaves} remains valid for these trees, which can be proved by 
% showing that every $k$-decomposition tree $T$ of an $n$-element set $A$ can be transformed 
% to a \textit{binary} $k$-decomposition tree $T'$ of the set $A$ with the same number of leaves.

%%%%%%%%%%%%%%%%%%%%%%%%%%%%%%%%%%%%%%%%%%%%%%%%%
%%%%%%%%%%%%%%%%%%%%%%%%%%%%%%%%%%%%%%%%%%%%%%%%%
\subsubsection{On the number of chain-decomposable bipartite graphs}
%%%%%%%%%%%%%%%%%%%%%%%%%%%%%%%%%%%%%%%%%%%%%%%%%
%%%%%%%%%%%%%%%%%%%%%%%%%%%%%%%%%%%%%%%%%%%%%%%%%

\begin{lemma}\label{lem:numOfGraphs}
	There are at most $n^{O(n)}$ $n$-vertex labeled chain-decomposable bipartite graphs.
\end{lemma}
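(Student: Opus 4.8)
The plan is to bound the number of $n$-vertex labelled chain-decomposable bipartite graphs by counting the decomposition trees $\mathcal{T}(G)$ and invoking the reconstruction result. Since a chain-decomposable graph is determined by its tree $\mathcal{T}(G)$ (by the Proposition), it suffices to count the possible trees. By Theorem~\ref{th:decTreeNodes}, every such tree has $O(n)$ nodes, so I first count the underlying rooted binary tree shapes: the number of rooted binary trees on $O(n)$ nodes is $2^{O(n)}$, which is negligible compared to the target bound $n^{O(n)}$.

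Next I would count the ways to decorate the nodes of a fixed tree shape. The leaves carry a label from $\{1,\dots,n\}$ together with a binary side-flag $f$; since each label appears in exactly one leaf, the leaf decorations contribute at most $n! \cdot 2^n = n^{O(n)}$. For the internal nodes, each is marked as UNION or CO-UNION (a constant number of choices), or as a CHAIN/CO-CHAIN node carrying a tuple $(k, v_1, v_2)$. Here $k$ is a natural number bounded by the size of the corresponding subgraph, hence by $n$, and $v_1, v_2$ are vertices, hence also among $n$ choices; so each internal node contributes at most $O(n^3)$ possibilities. Since there are $O(n)$ internal nodes, the internal decorations contribute at most $(O(n^3))^{O(n)} = n^{O(n)}$.

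Multiplying the bounds — $2^{O(n)}$ tree shapes, $n^{O(n)}$ leaf decorations, $n^{O(n)}$ internal-node decorations — gives a total of $n^{O(n)}$ distinct decorated trees, and since each chain-decomposable graph $G$ arises from at least one such tree, the number of $n$-vertex labelled chain-decomposable bipartite graphs is at most $n^{O(n)}$. One should be slightly careful that $\mathcal{T}(G)$ is not uniquely determined by $G$ (the decomposition at a node may not be unique, and the split into $G_1, G_2$ involves arbitrary choices of $v_1, v_2$), but this only means the map from trees to graphs is surjective, which is all that is needed for an upper bound; the distinguished-vertex footnote convention in the Proposition's proof handles the bookkeeping.

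The main obstacle is not conceptual but making sure the per-node accounting is airtight: in particular that $k$ really is bounded by $n$ (immediate, since the parts $A_i,B_i,C_i,D_i$ for $i\le k-1$ are all non-empty, forcing $k \le n$), and that the total node count $8n-17$ from Theorem~\ref{th:decTreeNodes} is genuinely $O(n)$ so that raising a polynomial-in-$n$ per-node bound to that power stays within $n^{O(n)}$. Everything else is a routine product of the three estimates above.
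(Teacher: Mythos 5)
Your proposal is correct and follows essentially the same route as the paper: bound the number of tree nodes by $O(n)$ via Theorem~\ref{th:decTreeNodes}, count the possible tree shapes and the per-node decorations (at most $2n$ per leaf and $O(n^3)$ per internal node), and use the fact that every chain-decomposable graph is recovered from at least one decorated tree, so the count of trees upper-bounds the count of graphs. The only cosmetic difference is that you count unlabelled binary tree shapes ($2^{O(n)}$) while the paper crudely uses $r^{r-1}$ rooted trees; both give $n^{O(n)}$.
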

\begin{proof}
	To prove the lemma, we use the fact that for $n \geq 3$ any decomposition tree of an $n$-vertex 
	chain-decomposable bipartite graph has at most $N=8n-17$ nodes (Theorem~\ref{th:decTreeNodes}).
	
	Let $T$ be a rooted binary tree with at most $N$ nodes. Let us estimate how many different
	decomposition trees of an $n$-vertex graph one can obtain from $T$ by marking its nodes.
%	Let us estimate the number of different markings of a fixed decomposition tree $\mathcal{T}$ 
%	of an $n$-vertex graph.
	According to the decomposition rules in Section~\ref{sec:dec_scheme} there are at most $2n$
	possibilities to label each of the leaves, 
	and at most $2n^3 +2$ possible labels for each of the internal nodes of $T$.
	All in all there are no more than $(2n^3 + 2n + 2)^{|V(T)|}$ ways to mark the tree $T$.
	Finally, as there are exactly $r^{r-1}$ rooted trees on $r$ vertices, the number of different 
	decomposition trees, and hence the number of $n$-vertex labeled chain-decomposable
	bipartite graphs, is at most
	$$
		\sum\limits_{r=1}^{N} r^{r-1} (2n^3 + 2n + 2)^r \leq N^N (2n^3 + 2n + 2)^N = n^{O(n)}.
	$$
\end{proof}

This lemma provides a factorial upper bound for the number of labeled chain-decom\-po\-sable bipartite graphs. A factorial lower bound 
follows from the obvious fact that all graphs of degree at most 1 (which form one of the minimal factorial classes of graphs) are chain-decomposable. 
Summarizing, we obtain the following conclusion.

\begin{theorem}\label{th:numb_chain_dec}
	There are $n^{\Theta(n)}$ labeled chain-decomposable bipartite graphs on $n$ vertices.
\end{theorem}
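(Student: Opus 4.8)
The plan is to establish the two bounds of the $n^{\Theta(n)}$ estimate separately, combining results already available in the excerpt. For the upper bound, Lemma~\ref{lem:numOfGraphs} already gives that there are at most $n^{O(n)}$ labelled chain-decomposable bipartite graphs on $n$ vertices, so nothing further is required on that side. The only remaining task is the matching factorial lower bound, i.e. exhibiting $n^{\Omega(n)}$ labelled chain-decomposable bipartite graphs on $n$ vertices.

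For the lower bound, the natural approach is to identify a simple, already-known minimal factorial class of graphs and verify that every member of it is chain-decomposable. The cleanest choice is the class of graphs of vertex degree at most $1$ (equivalently, disjoint unions of edges and isolated vertices). First I would recall (or cite from the literature on factorial classes, e.g. \cite{BBW00, Ale97}) that this class has speed $n^{\Theta(n)}$: the number of labelled perfect matchings on $2m$ vertices is $(2m-1)!! = n^{\Theta(n)}$, which already forces a factorial lower bound. Then I would argue that any such graph $G$ is chain-decomposable: $G$ is bipartite (a matching has an obvious bipartition), and if $G$ has at least two vertices it is disconnected, so the UNION rule of the decomposition scheme applies, splitting $G$ into two smaller graphs of degree at most $1$; recursing, the process terminates at single vertices. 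Hence every graph of degree at most $1$ is totally decomposable by chain decomposition.

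Putting the two halves together: Lemma~\ref{lem:numOfGraphs} gives the upper bound $n^{O(n)}$, and the embedding of the degree-$\le 1$ graphs gives the lower bound $n^{\Omega(n)}$, so the number of $n$-vertex labelled chain-decomposable bipartite graphs is $n^{\Theta(n)}$, which is exactly the statement of Theorem~\ref{th:numb_chain_dec}.

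I do not anticipate a serious obstacle here, since both ingredients are essentially in hand; the one point requiring a line of care is making sure the chosen minimal factorial class really does consist of chain-decomposable graphs under the precise rules of Section~\ref{sec:dec_scheme} — but for degree-$\le 1$ graphs this is immediate from the UNION rule, and one could equally well note that any bipartite forest (more generally) is handled by UNION together with a trivial chain decomposition, so the argument has room to spare.
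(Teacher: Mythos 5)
Your proposal is correct in substance and takes exactly the paper's route: the upper bound is quoted from Lemma~\ref{lem:numOfGraphs}, and the lower bound comes from observing that the graphs of vertex degree at most $1$ form a minimal factorial class consisting of chain-decomposable graphs. The one slip is your claim that any degree-at-most-$1$ graph with at least two vertices is disconnected: the recursion via the UNION rule bottoms out on single-edge components $K_2$, which are connected, but these are handled immediately by the CO-UNION rule, since the bipartite complement of $K_2$ is $2K_1$ and hence disconnected; with that one-line patch your argument coincides with the paper's.
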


%%%%%%%%%%%%%%%%%%%%%%%%%%%%%%%%%%%%%%%%%%%%%%%%%
%%%%%%%%%%%%%%%%%%%%%%%%%%%%%%%%%%%%%%%%%%%%%%%%%
\section{$P_7$-free bipartite graphs are chain-decomposable}
\label{sec:P7}
%%%%%%%%%%%%%%%%%%%%%%%%%%%%%%%%%%%%%%%%%%%%%%%%%
%%%%%%%%%%%%%%%%%%%%%%%%%%%%%%%%%%%%%%%%%%%%%%%%%

We prove the main result of this section trough a series of technical lemmas. The first of them provides a sufficient condition for a $P_7$-free bipartite graphs 
to admit a chain decomposition. 

\begin{lemma}\label{lem:P7_chain_dec}
	Let $G=(U,W,E)$ be a $P_7$-free bipartite graph such that
	\begin{enumerate}
		\item $A \cup Q \cup C$ is a partition of $U$;
		\item $B \cup R \cup D$ is a partition of $W$;
		\item every vertex in $B$ has a neighbour in $A$;
		\item every vertex in $R$ has a neighbour in $Q$;
		\item every vertex in $D$ has a neighbour in $C$;
		\item $A$ is anticomplete to $R \cup D$;
		\item $C$ is anticomplete to $B \cup R$;
		\item every vertex in $Q$ is complete to at least one of the sets $B$ and $D$.
	\end{enumerate}
	
	\noindent
	Then $G$ admits a \decomp decomposition 
	$$[(A_1, \ldots, A_k)(B_1, \ldots, B_k)(C_1, \ldots, C_k)(D_1, \ldots, D_k)],$$ with
	$A_1 = A$, $B_1 = B$, $C_1 = C$ and $D_1 = D$.
\end{lemma}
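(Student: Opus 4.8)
The plan is to prove the lemma by induction on $|V(G)|$, building the \decomp decomposition level by level from the outside in. Put $A_1:=A$, $B_1:=B$, $C_1:=C$, $D_1:=D$ as the first level. If $Q=\emptyset$, then $R=\emptyset$ by hypothesis~4 and $[(A)(B)(C)(D)]$ is itself a $1$-\decomp decomposition: the (anti)completeness relations between $A,B,C,D$ that it demands are exactly hypotheses 3, 5, 6 and 7. Degenerate cases in which some of $A,B,C,D$ (or, later, some of the refined parts) are empty need separate, but routine, treatment; below I assume $Q\neq\emptyset$ and $A,B,C,D\neq\emptyset$.

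For the inductive step, classify the vertices of $Q$ using hypothesis~8: let $Q^{D}$ be those complete to $D$ but not to $B$, let $Q^{B}$ be those complete to $B$ but not to $D$, and let $Q^{BD}$ be those complete to both. Split $R$ into $B_2:=N_G(Q^{D})\cap R$, $D_2:=N_G(Q^{B})\cap R$ and $R':=R\setminus(B_2\cup D_2)$. Now apply the induction hypothesis to $G':=G[Q\cup R]$ --- which is $P_7$-free as an induced subgraph of $G$ and has strictly fewer vertices --- equipped with the partition $(Q^{D},Q^{BD},Q^{B})$ of $V(G')\cap U$ and $(B_2,R',D_2)$ of $V(G')\cap W$. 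This produces a \decomp decomposition of $G'$ whose first level is $(Q^{D},B_2,Q^{B},D_2)$, and the decomposition claimed for $G$ is this one with the level $(A,B,C,D)$ prepended.

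The substantive part is verifying that $G'$ meets hypotheses 1--8 for the new partition. Hypotheses 1--5 are immediate from the definitions of $Q^{D},Q^{B},Q^{BD},B_2,D_2,R'$. Hypotheses 6 and 7 for $G'$ rest on the claim that \emph{no vertex of $R$ has a neighbour in both $Q^{D}$ and $Q^{B}$} (the remaining content of these two hypotheses being immediate); this is where $P_7$-freeness is used first: if $q\sim r\sim q'$ with $q\in Q^{D}$, $r\in R$, $q'\in Q^{B}$, choose $b\in B$ with $q\not\sim b$ and $d\in D$ with $q'\not\sim d$ (possible by the definitions of $Q^{D},Q^{B}$) and then $a\in A\cap N_G(b)$, $c\in C\cap N_G(d)$ (hypotheses 3 and 5); the vertices $a,b,q',r,q,d,c$ then induce a $P_7$, since every non-adjacency among them holds either because $U$ and $W$ are independent or by hypotheses 6 and 7. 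Hypothesis~8 for $G'$ is the statement that \emph{every $q\in Q^{BD}$ is complete to $B_2$ or to $D_2$}: otherwise $q$ has a non-neighbour $r_1\in B_2$ and a non-neighbour $r_2\in D_2$, with $r_1\sim q_1\in Q^{D}$ and $r_2\sim q_2\in Q^{B}$; picking $b\in B$ with $q_1\not\sim b$ and $d\in D$ with $q_2\not\sim d$, the vertices $r_1,q_1,d,q,b,q_2,r_2$ induce a $P_7$ (the distinctness $r_1\neq r_2$ and the non-adjacencies $r_1\not\sim q_2$, $q_1\not\sim r_2$ all supplied by the first claim, while $q$ being complete to $B\cup D$ supplies the two central edges). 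This second path, which runs through the $Q^{BD}$-vertex at its midpoint, is the most delicate point, and I expect it to be the main obstacle.

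Finally, one checks that prepending $(A,B,C,D)$ to the decomposition of $G'$ indeed gives a \decomp decomposition of $G$. The clauses relating level $1$ to the higher levels hold because: $A$ and $C$ are anticomplete to every block above level $1$ by hypotheses 6 and 7; each vertex of $B_1=B$ (resp.\ $D_1=D$) has a neighbour in $A_1=A$ (resp.\ $C_1=C$) by hypothesis 3 (resp.\ 5); each vertex of $A_2=Q^{D}$ has a non-neighbour in $B_1=B$ and $A_2$ is complete to $D_1=D$, both by the definition of $Q^{D}$ (symmetrically for $C_2=Q^{B}$); and the $A$- and $C$-blocks at levels $\ge 3$ lie inside $Q^{BD}$ and are therefore complete to $B_1$ and $D_1$. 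All clauses internal to the levels $\ge 2$ are inherited verbatim from the decomposition of $G'$, and the required nonemptiness holds since level $1$ is nonempty.
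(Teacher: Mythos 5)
Your argument is, in substance, the paper's own: the same partition of $Q$ into the classes complete to $B$, to $D$, or to both; the same two induced seven-vertex paths (your $a,b,q',r,q,d,c$ and $r_1,q_1,d,q,b,q_2,r_2$ are, up to the left--right symmetry, exactly the paper's Claims 1 and 2, and both verifications are correct); and the same recursion on $G'=G[Q\cup R]$ with the level $(A,B,C,D)$ prepended. The paper inducts on $|Q|$ rather than $|V(G)|$, which is an immaterial difference, and your check that the levels of $G'$ beyond the first lie inside $Q^{BD}$, hence are complete to $B_1$ and $D_1$, is the right observation for the prepending step.

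The one place where you are too quick is the sentence dismissing as ``routine'' the cases where some of the refined parts are empty. If, say, $D_2=\emptyset$ (for instance because $Q^{B}=\emptyset$, e.g.\ when every vertex of $Q$ is complete to both $B$ and $D$) while $Q\neq\emptyset$, your inductive call on $G'$ is not legitimate: you would be invoking the lemma to produce a chain decomposition of $G'$ whose first level contains the empty set $D_2$, but the nonemptiness clauses in the definition force $k=1$ as soon as a first-level set is empty, and a $1$-chain decomposition with first level $(Q^{D},B_2,Q^{B},\emptyset)$ leaves $Q^{BD}\cup R'$ uncovered. So this case cannot be folded into the recursion at all; it must be settled by a direct construction. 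This is precisely the paper's Case 1: when $R_B=\emptyset$ (your $D_2=\emptyset$) it exhibits the explicit $2$-chain decomposition $[(A,\,Q_D\cup Q_{BD})(B,\,R_D\cup R_{BD})(C,\,Q_B)(D,\,\emptyset)]$, and symmetrically when $R_D=\emptyset$; note that $Q^{BD}$ has to be merged with one of the outer classes, which is a small idea rather than a formality. With that case added (and under the standing assumption $A,B,C,D\neq\emptyset$, which the paper also makes implicitly and which holds where the lemma is applied), your proof is complete and coincides with the paper's.
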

\begin{proof}
	The proof is by induction on the size of $Q$. If $Q$ is empty, then $R$ is empty too (see assumption 4). 
Therefore, in this case $[(A)(B)(C)(D)]$ is a 1-\decomp decomposition of $G$.
	
	Assume now that $Q \neq \emptyset$. We partition $Q$ into three sets:
	\begin{enumerate}
		\item[-] $Q_B$ is the set of vertices that are complete to $B$, but have at least one non-neighbour
		in $D$;
		\item[-] $Q_D$ is the set of vertices that are complete to $D$, but have at least one non-neighbour
		in $B$;
		\item[-] $Q_{BD}$ is the set of vertices that are complete to both $B$ and $D$.
	\end{enumerate}
	
	\noindent
	\textbf{Claim 1.} \textit{No vertex in $R$ has neighbours in both $Q_B$ and $Q_D$}.
	
	\medskip
	\noindent
	\textit{Proof.} Suppose to the contrary that $x \in R$ has a neighbour $q_1 \in Q_B$ and a
	neighbour $q_2 \in Q_D$. Then $a, b, q_1, x, q_2, d, c$ induce forbidden $P_7$, where 
	$b \in B$ is a non-neighbour of $q_2$; $a \in A$ is a neighbour of $b$;
	$d \in D$ is a non-neighbour of $q_1$, and $c \in C$ is a neighbour of $d$.
	
\medskip
\medskip
	\noindent
	Claim 1 allows us to partition the set $R$ into three subsets:
	\begin{enumerate}
		\item[-] $R_B$ is the set of vertices that have at least one neighbour in $Q_B$;
		\item[-] $R_D$ is the set of vertices that have at least one neighbour in $Q_D$;
		\item[-] $R_{BD} = R \setminus (R_B \cup R_D)$; note that
		every vertex in $R_{BD}$ has at least one neighbour in $Q_{BD}$.
	\end{enumerate}
	
	\noindent
	\textbf{Claim 2.} \textit{Every vertex in $Q_{BD}$ is complete to at least one of the sets $R_B$ 
	and $R_D$}.
	
	\medskip
	\noindent
	\textit{Proof.} Suppose to the contrary that $x \in Q_{BD}$ has a non-neighbour $r_1 \in R_B$ and 
	a non-neighbour $r_2 \in R_D$. Then $r_1, q_1, b, x, d, q_2, r_2$ induce forbidden $P_7$,
	where $q_1 \in Q_B$ is a neighbour of $r_1$; $d \in D$ is a non-neighbour of $q_1$;
	$q_2 \in Q_D$ is a neighbour of $r_2$, and $b \in B$ is a non-neighbour of $q_2$.
	
	\medskip
	\medskip
	\noindent
	Now we split the analysis into the following two cases:
	\begin{enumerate}
		\item At least one of the sets $R_B$ and $R_D$ is empty. If $R_B = \emptyset$, then
		$$
			[(A, Q_D \cup Q_{BD}) (B, R_D \cup R_{BD}) (C, Q_B) (D, \emptyset)]
		$$ 
		is a 2-\decomp decomposition of $G$.
		Similarly, if $R_D = \emptyset$, then 
		$$
			[(A,Q_D) (B, \emptyset) (C, Q_B \cup Q_{BD}) (D,R_B \cup R_{BD})]
		$$ 
		is a 2-\decomp decomposition of $G$.
		
		\item Both sets $R_B$ and $R_D$ are non-empty. In this case the graph $G' = G[Q \cup R]$
		satisfies the assumptions $1-8$ of the lemma with $Q_D \cup Q_{BD} \cup Q_B$ being a
		partition of $Q$ and $R_D \cup R_{BD} \cup R_B$ being a partition of $R$, and 
		$|Q_{BD}| < |Q|$. Therefore, by induction hypothesis $G'$ admits a $k$-\decomp decomposition
		for some natural $k$
		$$
			[(A_1, \ldots, A_k)(B_1, \ldots, B_k)(C_1, \ldots, C_k)(D_1, \ldots, D_k)],
		$$
		where $A_1 = Q_D$, $B_1 = R_D$, $C_1 = Q_B$, $D_1 = R_B$.
		Now it is easy to check that 
		$$
			[(A,A_1, \ldots, A_k)(B,B_1, \ldots, B_k)(C,C_1, \ldots, C_k)(D,D_1, \ldots, D_k)]
		$$
		is in turn a $(k+1)$-\decomp decomposition of $G$.
	\end{enumerate}
\end{proof}

Given a bipartite graph $G=(U,W,E)$, we associate with each part $C \in \{ U,W \}$ of $G$  the \textit{neighbourhood graph} $G_{C}$ defined as follows:
$G_C = (C,E_C)$, where $E_C = \{ (x,y) | N_G(x) \cap N_G(y) \neq \emptyset \}$. The following lemma characterizes the graphs $G_U$ and $G_W$ in terms of 
two forbidden induced subgraphs, $P_4$ and $square$, where $square$ is a chordless cycle on 4 vertices.

\begin{lemma}
	If $G = (U,W,E)$ is a $P_7$-free bipartite graphs, then both $G_U$ and $G_W$ are 
	$(P_4,square)$-free graphs.
\end{lemma}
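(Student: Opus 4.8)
The plan is to prove the contrapositive for each forbidden subgraph separately: if $G_U$ contains an induced $P_4$ or an induced $square$, then $G$ contains an induced $P_7$. The key observation is that an edge $(x,y)$ of $G_U$ is witnessed by a common neighbour $z \in W$ with $z \in N_G(x) \cap N_G(y)$, so a structure in $G_U$ on a few vertices of $U$ lifts to a structure in $G$ on those vertices together with chosen witnesses in $W$. The main work is to choose the witnesses carefully so that the lifted structure in $G$ is exactly an induced path on $7$ vertices, with no unwanted chords.

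\textbf{Case $G_U$ contains an induced $P_4$.} Suppose $a-b-c-d$ is an induced path in $G_U$, meaning $ab, bc, cd$ are edges of $G_U$ but $ac, bd, ad$ are not. Pick a common neighbour $p \in N_G(a) \cap N_G(b)$, a common neighbour $q \in N_G(b) \cap N_G(c)$, and a common neighbour $r \in N_G(c) \cap N_G(d)$, all in $W$. I would first argue $p, q, r$ are distinct: e.g. if $p = r$ then $p$ is adjacent to both $a$ and $d$, so $ad$ would be an edge of $G_U$, a contradiction; similarly $p \ne q$ (else $p$ is adjacent to $a$ and $c$, giving edge $ac$) and $q \ne r$. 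Now consider the six vertices $a, p, b, q, c, r, d$ in $G$; the natural candidate path is $a - p - b - q - c - r - d$. I must check this is induced, i.e. rule out all chords. Since $G$ is bipartite with $a,b,c,d \in U$ and $p,q,r \in W$, the only possible chords are between a $U$-vertex and a $W$-vertex. The non-edges $ac, ad, bd \notin E(G_U)$ immediately kill the pairs $(a,q)$ (else $a$ and $c$ share the neighbour $q$), $(a,r)$, $(d,q)$, $(d,p)$, $(b,r)$, $(c,p)$; and the pairs $(a,r),(d,p)$ are similarly excluded, so the path is chordless, and we have found an induced $P_7$ — but wait, this is only $7$ vertices when all of $a,b,c,d,p,q,r$ are distinct, which we established; so we get the forbidden $P_7$, contradiction.

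\textbf{Case $G_U$ contains an induced $square$.} Suppose $a-b-c-d-a$ is an induced $4$-cycle in $G_U$, so $ab, bc, cd, da$ are edges and $ac, bd$ are not. Choose common neighbours $p \in N_G(a)\cap N_G(b)$, $q \in N_G(b)\cap N_G(c)$, $r \in N_G(c)\cap N_G(d)$, $s \in N_G(d)\cap N_G(a)$ in $W$. As before, distinctness follows from the two non-edges: $p,q,r,s$ are pairwise distinct, since e.g. $p=r$ would make $p$ a common neighbour of $a$ and $c$. The plan here is to extract an induced $P_7$ from the eight vertices $a,p,b,q,c,r,d,s$: I would look at the path $p - a - s - d - r - c - q$ on seven vertices (dropping $b$), and verify it is induced. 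The possible chords are $U$-$W$ pairs among these; $a d$ being an edge of $G_U$ does \emph{not} directly cause a chord, but I need $a$ non-adjacent to $r$ (would give edge $ac$ in $G_U$ — excluded), $d$ non-adjacent to $q$ (edge $bd$ — excluded), $c$ non-adjacent to $s$ (edge $ac$ — excluded), $a$ non-adjacent to $q$ ($ac$), $p$ non-adjacent to $d$ ($bd$), $p$ non-adjacent to $c$ ($bd$... actually $p\in N(a)\cap N(b)$, so $p$ adjacent to $c$ gives $ac$ and $bc$ — the former excluded). After checking these cases one finds the seven vertices induce a $P_7$, the desired contradiction.

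\textbf{Main obstacle.} The symmetric statement for $G_W$ follows by exchanging the roles of $U$ and $W$, since the hypothesis ``$G$ is $P_7$-free bipartite'' is symmetric in the two parts. The real subtlety — and the step I expect to require the most care — is the chord-checking in the $square$ case: one must pick the \emph{right} $7$-vertex subset out of the $8$ available (there are several $P_7$-candidates, and not all choices of which witness vertices and which cycle vertex to drop yield an induced path), and then systematically confirm that every one of the potential $U$-$W$ chords is forbidden by the two non-edges $ac, bd$ of the square. A clean way to organize this is to note that each potential chord $(u, w)$ with $u \in \{a,b,c,d\}$ and $w$ a witness for an edge $\{u', u''\}$ of the cycle forces $u$ into a common neighbourhood with $u'$ or $u''$, hence forces an edge of $G_U$ incident to $u$; one then checks that every such forced edge is one of the two diagonals, which are absent. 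Getting this bookkeeping exactly right, rather than any deep idea, is where the proof's weight lies.
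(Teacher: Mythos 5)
Your proposal is correct and follows exactly the verification that the paper leaves implicit with ``it can be easily checked'': lifting the induced $P_4$ (resp.\ $square$) in $G_U$ to an induced $P_7$ in $G$ via distinct common-neighbour witnesses, with all chords excluded by the non-edges $ac,bd$ (and $ad$ in the path case). Both of your concrete choices --- the path $a\text{-}p\text{-}b\text{-}q\text{-}c\text{-}r\text{-}d$ and, in the square case, $p\text{-}a\text{-}s\text{-}d\text{-}r\text{-}c\text{-}q$ --- do yield chordless $P_7$'s, so the argument is complete (apart from the harmless ``six vertices'' slip).
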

\begin{proof}
	It can be easily checked that if, say, $G_U$ contains $P_4$ or $square$ as an induced
	subgraph, then $G$ contains an induced $P_7$.
\end{proof}

$(P_4,square)$-free graphs are known in the literature as \textit{quasi-threshold} \cite{QT} or \textit{trivially perfect} \cite{Golumbic}. 
It is known that a quasi-threshold graph can be constructed from $K_1$ by repeatedly taking either the disjoint 
union of two quasi-threshold graphs or the join of a quasi-threshold graph with $K_1$ .

\begin{lemma}\label{claim:ccs}
	Let $G=(V,E)$ be a connected non-complete quasi-threshold graph. Then
	$G$ has a clique cutset $C$ such that $C$ is complete to $V \setminus C$.
\end{lemma}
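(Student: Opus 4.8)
The plan is to let $C$ be the set of all \emph{universal} vertices of $G$ (those adjacent to every other vertex of $G$) and to show that this $C$ is the required clique cutset. Two of the three conditions will be immediate: any two universal vertices are adjacent, so $C$ is a clique; and by definition each vertex of $C$ is adjacent to every vertex outside $C$, so $C$ is complete to $V\setminus C$. What will need checking is that $C$ is nonempty, that $C\neq V$, and that $G-C$ is disconnected.

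For nonemptiness I would first record a small auxiliary fact: every connected quasi-threshold graph has a universal vertex. This follows from the quoted construction of quasi-threshold graphs from $K_1$ by disjoint unions and additions of a universal vertex --- if the last step in building a graph were a disjoint union of two nonempty quasi-threshold graphs the result would be disconnected, so a connected quasi-threshold graph on at least two vertices must have been obtained by adding a universal vertex, while $K_1$ trivially has one. Applying this to $G$ gives $C\neq\emptyset$. For $C\neq V$, note that since $G$ is non-complete it contains two non-adjacent vertices, and neither of them is universal, so $V\setminus C\neq\emptyset$.

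The crux will be showing that $G-C$ is disconnected, and I would argue this by contradiction. If $G-C$ were connected, then, being a nonempty induced subgraph of $G$, it would be a connected quasi-threshold graph, hence (by the auxiliary fact) would contain a vertex $w$ that is universal within $G-C$. Then $w$ would be adjacent to all of $(V\setminus C)\setminus\{w\}$ and, since $C$ is complete to $V\setminus C$, to all of $C$ as well; thus $w$ would be universal in $G$, forcing $w\in C$, a contradiction. Hence $G-C$ is disconnected, and together with the earlier observations this shows $C$ is a clique cutset of $G$ complete to $V\setminus C$. I expect the only real subtlety to be the realisation that $C$ must collect \emph{all} universal vertices simultaneously: deleting just one universal vertex need not disconnect $G$ (e.g.\ when $G$ is the join of $K_2$ with $\overline{K_2}$), so the proof genuinely relies on applying the ``connected $\Rightarrow$ universal vertex'' fact to the graph $G-C$ rather than to $G$ itself.
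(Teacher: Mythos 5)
Your proof is correct, and it is organised differently from the paper's. The paper argues by induction on $|V(G)|$: since $G$ is connected, the construction characterization gives a universal vertex $c_1$ (i.e.\ $G$ is the join of some quasi-threshold $G'$ with $c_1$); if $G'$ is disconnected then $\{c_1\}$ is the cutset, and otherwise the induction hypothesis applied to $G'$ yields a cutset $C'$ and one takes $C'\cup\{c_1\}$. You instead name the cutset explicitly as the set $C$ of \emph{all} universal vertices of $G$ and verify the three properties directly, the only nontrivial one being that $G-C$ is disconnected; there you use heredity of the class (it is defined by the forbidden induced subgraphs $P_4$ and the square) plus the same key structural fact the paper uses --- every connected quasi-threshold graph has a universal vertex --- but applied to $G-C$ rather than recursively, so a universal vertex of a connected $G-C$ would be universal in $G$ and hence belong to $C$, a contradiction. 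Both arguments rest on the same ingredient (join with $K_1$ at the top of the construction); the paper's induction is marginally shorter to write, while your version avoids induction on $G$ itself, identifies the cutset canonically (in fact the paper's inductively built cutset is contained in your $C$), and your closing remark correctly isolates the one genuine subtlety, namely that one must remove all universal vertices at once since removing a single one need not disconnect the graph.
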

\begin{proof}
	We prove the lemma by induction on the number of vertices in $G$. There are no
	connected non-complete graphs with at most 2 vertices. The only  connected 
	non-complete graph with 3 vertices is $P_3$. Clearly, 
	the statement is true for this graph.

	Let $n=|V(G)|> 3$. Assume the statement is true for every graph with fewer than 
	$n$ vertices. Since $G$ is connected, it is the join of a quasi-threshold graph $G'$ with a vertex, say, 
	$c_1 \in V$. If $G'$ is disconnected, then $\{c_1\}$ is the 
	desired clique cutset. Otherwise $G'$ is a connected non-complete quasi-threshold
	graph, and hence by induction $G'$ contains a clique cutset $C'$, which is complete to 
	$V(G') \setminus C'$. Clearly, $C' \cup \{c_1\}$ is the desired clique cutset in $G$.
\end{proof}

\begin{lemma}\label{lem:P7chainDecomposition}
	Let $G=(U,W,E)$ be a connected $P_7$-free bipartite graph. 
	If $G_U$ (resp. $G_W$) is non-complete, then $G$ admits 
	a left (resp. right) \decomp decomposition.
\end{lemma}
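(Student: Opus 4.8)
The plan is to deduce the statement from Lemma~\ref{lem:P7_chain_dec} by reading off a suitable partition of $U$ from a clique cutset of the neighbourhood graph $G_U$. First I would note that, because $G$ is connected and $G_U$ is non-complete, $G_U$ is itself connected: any two vertices of $U$ are joined in $G$ by a path which alternates between $U$ and $W$, and any two consecutive $U$-vertices along it share the intervening $W$-vertex as a common $G$-neighbour. Since $G$ is $P_7$-free, $G_U$ is $(P_4,\mathrm{square})$-free (as shown just above), hence a connected non-complete quasi-threshold graph, so Lemma~\ref{claim:ccs} applies and yields a clique cutset $Q$ of $G_U$ that is complete, in $G_U$, to $U\setminus Q$. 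Grouping the connected components of $G_U-Q$ into two non-empty classes I obtain a partition $U=U_1\cup Q\cup U_2$ in which $U_1$ and $U_2$ are non-empty and anticomplete in $G_U$ (so no vertex of $U_1$ and vertex of $U_2$ have a common $G$-neighbour), while every vertex of $Q$ has a common $G$-neighbour with every vertex of $U_1\cup U_2$.

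Next I would instantiate Lemma~\ref{lem:P7_chain_dec} with $A:=U_1$, $C:=U_2$ and the same $Q$, and split $W$ according to where neighbourhoods land: let $B$ be the set of vertices of $W$ having a neighbour in $U_1$, let $D$ be the set having a neighbour in $U_2$, and let $R:=W\setminus(B\cup D)$. Since $U_1$ and $U_2$ are anticomplete in $G_U$, no vertex of $W$ has a neighbour in both $U_1$ and $U_2$, so $B$, $R$, $D$ is a genuine partition of $W$, $B\cap D=\emptyset$, and $B\neq\emptyset\neq D$ because $G$ is connected; and every vertex of $R$ has a neighbour in $G$, which is then forced into $Q$. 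With these definitions, hypotheses 1–7 of Lemma~\ref{lem:P7_chain_dec} are immediate from the construction: for instance $A$ is anticomplete to $R\cup D$ because a vertex of $R\cup D$ adjacent to something in $U_1$ would lie in $B$.

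The only hypothesis requiring real work is condition 8: every vertex of $Q$ is complete to $B$ or to $D$. I would prove this by contradiction from $P_7$-freeness. Suppose $q\in Q$ has a non-neighbour $b\in B$ and a non-neighbour $d\in D$; pick $a\in U_1$ adjacent to $b$ and $c\in U_2$ adjacent to $d$, and (using the completeness of $Q$ to $U\setminus Q$ in $G_U$) pick a common $G$-neighbour $b'$ of $q$ and $a$, which then lies in $B$, and a common $G$-neighbour $d'$ of $q$ and $c$, which then lies in $D$. I would then check that $b,a,b',q,d',c,d$ induces a $P_7$ in $G$: the six consecutive pairs are edges by construction, and each of the remaining pairs is a non-edge for one of three reasons — both endpoints lie on the same side of the bipartition, or it pairs a vertex of $U_1$ (resp. $U_2$) with a $W$-vertex of $B$ (resp. $D$) whose neighbourhood avoids $U_2$ (resp. $U_1$), or it is one of the assumed non-edges $qb$, $qd$. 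I expect the routine but slightly fiddly part to be confirming that the seven vertices are pairwise distinct (e.g. $b\neq b'$ since $b'\sim q$ while $b\not\sim q$, and $\{b,b'\}\cap\{d,d'\}=\emptyset$ since $B\cap D=\emptyset$); this is the only genuine obstacle. Once all eight hypotheses are verified, Lemma~\ref{lem:P7_chain_dec} gives a chain decomposition of $G$ with the parts taken in the order $(U,W)$, i.e. a left chain decomposition. The parenthetical claim follows by repeating the argument with the parts of $G$ listed in the order $(W,U)$, whose associated neighbourhood graph is $G_W$, yielding a right chain decomposition.
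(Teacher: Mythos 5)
Your proposal is correct and takes essentially the same route as the paper: it extracts a clique cutset $Q$ of the quasi-threshold graph $G_U$ that is complete to $U\setminus Q$ via Lemma~\ref{claim:ccs}, groups the components of $G_U-Q$ into the sets $A$ and $C$, and verifies the hypotheses of Lemma~\ref{lem:P7_chain_dec}, with condition 8 forced by $P_7$-freeness. Your explicit seven-vertex induced path $b,a,b',q,d',c,d$ is just a concrete version of the paper's argument that concatenates two shortest paths from a vertex of $Q$ to non-neighbours in two different components.
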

\begin{proof}
	We prove the statement for $G_U$. For $G_W$, the proof is similar.
	
	If $G=(U,W,E)$ is connected, the so is $G_U$. Therefore, by Lemma~\ref{claim:ccs}, $G_U$ contains a clique cutset $Q$ which is complete to 
	$U \setminus Q$. Let $G_U[A_1], \ldots, G_U[A_k]$, $k \geq 2$, be 
	connected components of $G_U \setminus Q$. These components 
	correspond to connected components of $G \setminus Q$. For each $i$, we denote by $F_i = (A_i,B_i,E_i)$  the connected component 
	of $G \setminus Q$ that contains $A_i$. 
	Since $G$ is connected, every vertex in $W$ has a neighbour in $U$. Therefore,
	the vertices of $G$ are partitioned as follows: $U = A_1 \cup \ldots \cup A_k \cup Q$ and 
	$W = B_1 \cup \ldots \cup B_k \cup R$, where 
	$R = N(Q) \setminus (B_1 \cup \ldots \cup B_k)$. 
	
	Let $x$ be a vertex in $Q$. Since $x$ is a dominating vertex in $G_U$, every other
	vertex in $U$ has a common neighbour with $x$ in $G$. In particular, $x$ has a 
	neighbour in each of $B_i$, $i = 1, \ldots, k$. 
	We claim that $x$ is complete to all but at most one set $B_i$, $i = 1, \ldots, k$. 
	Indeed, if $x$ has a non-neighbour $y \in B_i$ and a non-neighbour $z \in B_j$, 
	$i \neq j$, then a shortest path between $x$ and $y$, and a shortest path 
	between $x$ and $z$ together form a path which contains an induced $P_7$.
	
%	Further, we consider two cases depending on the number $k$ of connected components in
%	$G_U \setminus S$. First, let $k \geq 3$. We show that in this case $S$ is complete to all but one set 
%	of $B_i$, $i = 1, \ldots, k$. Since every vertex in $S$ is complete to all but
%	one set of $B_i$, $i = 1, \ldots, k$, we only need to show that there are no
%	two vertices in $S$ that are non-complete to different sets.
%	Suppose to the contrary that $x \in S$ has a non-neighbour
%	$b_i$ in $B_i$ and $y \in S$ has a non-neighbour $b_j$ in $B_j$ for some 
%	$i, j \in  \{1, \ldots, k\}$, $i \neq j$. Let $s \in \{1, \ldots, k\} \setminus \{i,j\}$ and
%	let $b_s$ be a vertex in $B_s$. Since each vertex in $S$ is non-complete to at most
%	one of the sets $B_i$, $i = 1, \ldots, k$, $x$ is adjacent to $b_j, b_s$, and $y$ is
%	adjacent to $b_i, b_s$. But then a neighbour $a_i$ of $b_i$ in $F_i$, a neighbour
%	$a_j$ of $b_j$ in $F_j$ together with $b_i,y,b_s,x,b_j$ induce a forbidden $P_7$.
%	For definiteness, let $S$ be complete to $B_i$ for every $i = 2, \ldots, k$.
	To complete the proof, we denote $A=A_1$, $B=B_1$, $C=\bigcup_{i=2}^k A_i$, $D=\bigcup_{i=2}^k B_i$ and observe that
the sets $A,Q,C,B,R,D$ satisfy the assumptions of Lemma~\ref{lem:P7_chain_dec}. Therefore,
	$G$ admits a \decomp decomposition.
\end{proof}

\begin{lemma}\label{lem:neighGraph}
	Let $G=(U,W,E)$ be a $P_7$-free bipartite graph with at least three vertices,
	and $H=\overline{G^b}$. Then at least one of the following graphs is not 
	complete: $G_U, G_W, H_U$.
\end{lemma}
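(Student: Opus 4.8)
The plan is to argue by contradiction: assume all three graphs $G_U$, $G_W$, $H_U$ are complete, and derive structural information about $G$ that is strong enough to be inconsistent with $G$ having at least three vertices (or to force $G$ to actually fall into one of the excluded cases). First I would unpack what completeness of each neighbourhood graph means. Saying $G_U$ is complete means every two vertices of $U$ have a common neighbour in $W$; in particular $G$ is connected and every vertex of $W$ that has degree $\geq 1$ is "shared". Symmetrically $G_W$ complete means every two vertices of $W$ have a common neighbour in $U$. For the third condition, $H = \overline{G^b}$ has the same parts $U,W$, and $H_U$ complete means every two vertices of $U$ have a common neighbour in $H$, i.e. for every $u,u' \in U$ there is some $w \in W$ non-adjacent (in $G$) to both $u$ and $u'$.

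The key step is to extract a contradiction from the coexistence of "every pair in $U$ has a common $G$-neighbour" and "every pair in $U$ has a common $G$-non-neighbour", while $P_7$-freeness forbids long induced paths. I would first observe that $G_U$ complete forces, via the quasi-threshold structure (Lemma on $G_U$ being $(P_4,\mathit{square})$-free) together with Lemma~\ref{claim:ccs}, that $G_U$ has no clique cutset complete to the rest — but a complete graph trivially has none, so instead I would use completeness directly. Consider any $w \in W$: its non-neighbourhood $U \setminus N_G(w)$ cannot contain two vertices, for if $u,u' \notin N_G(w)$ then to realise the common-neighbour property of $G_U$ we need another $w'$ adjacent to both, and one checks that combining a short $u$–$w$ path (through a common neighbour of $u$ with some third vertex) with the edges to $w'$ creates an induced $P_7$ — or more cleanly, every $w \in W$ is adjacent to all but at most one vertex of $U$. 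The same argument with $G_W$ gives that every $u \in U$ misses at most one vertex of $W$. But then $H = \overline{G^b}$ has maximum degree at most $1$ on each side, so $H_U$ (common-neighbour graph of $H$) is edgeless unless $|U| \le$ something small; if $|U| \ge 2$ then $H_U$ has an edge only when two vertices of $U$ share an $H$-neighbour, which given the degree bound in $H$ means a specific vertex of $W$ is $H$-adjacent to two vertices of $U$ — contradicting that $u$'s non-neighbourhood has size $\le 1$, unless everything collapses to at most two vertices. Chasing the boundary cases ($|U|=1$, $|W|=1$, $|U|=|W|=2$) by hand then finishes the argument, since with $\ge 3$ vertices at least one part has $\ge 2$ vertices and the degree constraints cannot all hold.

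The main obstacle I anticipate is the "every $w \in W$ misses at most one vertex of $U$" step: one must be careful that the $P_7$ produced is genuinely induced. The natural witness is $u_1, w_1, u_2, w', u_3, w_2, u_4$-type configurations, and I would need to pick the intermediate vertices using the common-neighbour guarantees from $G_U$ (and shortest paths, as in the proof of Lemma~\ref{lem:P7chainDecomposition}) so that no chords appear. An alternative, possibly cleaner, route is: if $G_U$ and $G_W$ are both complete, show directly that $\overline{G^b}$ is disconnected or has each component trivial, hence $H_U$ is complete only in degenerate cases; this reduces everything to a finite check. Either way the combinatorial core is small, and the bulk of the work is a careful enumeration of the few graphs on $\le 2$ vertices per side for which all three neighbourhood graphs could vacuously be complete.
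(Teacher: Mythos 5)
Your reduction hinges on the claim that (under all three completeness assumptions) every $w \in W$ is adjacent to all but at most one vertex of $U$, and this is exactly the step you do not prove; moreover, the justification you sketch cannot work as described. You propose to take two non-neighbours $u,u'$ of $w$, a common neighbour $w'$ of $u,u'$ supplied by completeness of $G_U$, and short paths through further common neighbours to assemble an induced $P_7$. But that local configuration can occur in a $P_7$-free bipartite graph in which $G_U$ and $G_W$ are both complete: take $U=\{u_1,u_2,u_3\}$, $W=\{w,w'\}$ with $w$ adjacent only to $u_3$ and $w'$ adjacent to $u_1,u_2,u_3$. Here $G_U$ and $G_W$ are complete, $w$ has two non-neighbours in $U$, and no $P_7$ exists (the graph has five vertices). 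So no construction that uses only the ingredients you name can force a $P_7$; completeness of $H_U$ (the existence of common non-neighbours for pairs in $U$) must enter in an essential way, and your sketch never specifies how --- ``one checks that \ldots creates an induced $P_7$'' is, at this point, the whole content of the lemma. Two further problems: the ``same argument with $G_W$'' giving that every $u\in U$ misses at most one vertex of $W$ is not available by symmetry, because the hypothesis is that $H_U$ is complete, not $H_W$ (this half is in fact unnecessary: your first claim alone would make $H_U$ edgeless, contradicting its completeness as soon as $|U|\ge 2$); and the concluding finite check would not come out as you assert, since stars $K_{1,m}$ with centre in $U$ satisfy all three completeness conditions, so the degenerate case $|U|=1$ must be excluded by some additional hypothesis (in the paper's application this is the connectedness of $\overline{G^b}$), not settled by ``the degree constraints cannot all hold''.

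For contrast, the paper's proof does not assume $H_U$ complete at all and avoids any degree claim. Assuming $G_U$ and $G_W$ complete, it picks $x,y\in U$ maximizing $|N_G(x)\cup N_G(y)|$ and shows that $N_G(x)\cup N_G(y)=W$: if some $v\in W$ lay outside the union, maximality forces $N_G(x)$ and $N_G(y)$ to be incomparable and forces every vertex of $U$ with a neighbour outside the union to be anticomplete to $N_G(x)\setminus N_G(y)$ or to $N_G(y)\setminus N_G(x)$, and then common neighbours of $v$ with one vertex from each of these sets (supplied by completeness of $G_W$) yield an induced $P_7$. Hence $x$ and $y$ have no common non-neighbour, i.e. they are non-adjacent in $H_U$, which is therefore not complete. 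The extremal choice of the pair $x,y$ is the mechanism that actually produces the $P_7$; your proposal lacks a substitute for it, and that is the genuine gap.
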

\begin{proof}
	If $G_U$ or $G_W$ is not complete, then we are done. 
	Assume $G_U$ and $G_W$ are complete, i.e any two vertices in the same 
	color class have a common neighbour in $G$. This implies, in particular,  that $G$ 
	is connected.
	
	Let $x$ and $y$ be two vertices in $U$ such that the union of their neighbourhoods
	is of maximum cardinality, that is 
	$$
		|N_G(x) \cup N_G(y)| = \max\limits_{a,b \in U} |N_G(a) \cup N_G(b)|.
	$$
	Denote $S = N_G(x) \cup N_G(y)$, $S_x = N_G(x) \setminus N_G(y)$,
	$S_y = N_G(y) \setminus N_G(x)$, $S_{xy} = N_G(x) \cap N_G(y)$ and
	$\overline{S} = W \setminus S$.
	
	 Assume $\overline{S} \neq \emptyset$. This implies that 
\begin{itemize}
\item[(1)] {\it $N_G(x)$ and $N_G(y)$ are incomparable}, i.e. neither of them contains the other.
Indeed, suppose to the contrary that
	$N_G(y) \subseteq N_G(x)$, and let $s$ be a vertex in $\overline{S}$. Since $G$ is
	connected, $s$ has a neighbour $z$ in $U$. But then 
	$|N_G(x) \cup N_G(z)| > |S|$, contradicting the choice of $x,y$. 
	Therefore both $S_x$ and $S_y$ are non-empty. 
\item[(2)] {\it every vertex of $U$ that has a neighbour in $\overline{S}$ is anticomplete either to $S_x$ or to $S_y$}.	
To show this, consider a vertex $z \in U$ that has a neighbour in $\overline{S}$. 
If $z$ is complete to $S_x$, then $|N_G(y) \cup N_G(z)| > |S|$. This contradicts the choice of $x,y$ and proves that 
$z$ has a non-neighbour in $S_x$. Similarly, $z$ has a non-neighbour in $S_y$.
If, in addition, $z$ has neighbours in both $S_x$ and $S_y$, then $x,y,z$ together with a neighbour and a non-neighbour of $z$ in $S_x$ and 
a neighbour and a non-neighbour of $z$ in $S_x$ induce a $P_7$. This contradiction shows that $z$
is anticomplete either to $S_x$ or to $S_y$. 
\end{itemize}		
	Now let $v \in \overline{S}$, $x_1 \in S_x$ and $y_1 \in S_y$. Since $G_W$ is
	complete, $v,x_1$ have a common neighbour $x_2$, and $v,y_1$ have a common
	neighbour $y_2$. By (2),  $x_2 \neq y_2$, and $x_2$ is not adjacent to $y_1$, and
	$y_2$ is not adjacent to $x_1$. But then $\{ x,x_1,x_2,v,y_2,y_1,y \}$ induces a $P_7$. 
	This contradiction shows that $S = W$. But then $N_H(x) \cap N_H(y) = \emptyset$, and therefore $H_U$ is not
	complete.
\end{proof}

\begin{theorem}\label{thm:P7_chain_dec}
Every $P_7$-free bipartite graph is chain-decomposable. 
\end{theorem}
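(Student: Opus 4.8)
The plan is to prove the theorem by induction on the number of vertices of $G$, using the three preceding lemmas as the base toolkit. If $G$ has at most one vertex there is nothing to do, and if $G$ has two vertices it is trivially chain-decomposable (either edgeless, hence disconnected, or a single edge, which is a $1$-chain decomposition). So assume $|V(G)| \geq 3$ and that every $P_7$-free bipartite graph with fewer vertices is chain-decomposable. Since chain-decomposability is defined recursively in terms of the two decomposition components, and since each component of a chain decomposition (or of a disjoint-union / bipartite-complement-disconnected split) has strictly fewer vertices than $G$ — here the marker vertices $v_1, v_2$ added in the $k \geq 2$ case cause no trouble, as the components $G[A\cup B]$ and $G[C\cup D]$ are genuine induced subgraphs of $G$ and each is a $P_7$-free bipartite graph on fewer than $n$ vertices — it suffices to show that $G$ (or $\overline{G^b}$) itself admits one of the four decomposition types: UNION, CO-UNION, CHAIN, or CO-CHAIN.

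The core case analysis is as follows. If $G$ is disconnected, then $G = G_1 + G_2$ and $\mathcal{T}(G)$ starts with a UNION node; likewise if $\overline{G^b}$ is disconnected we use CO-UNION. So assume both $G$ and $\overline{G^b}$ are connected. Now invoke Lemma~\ref{lem:neighGraph}: at least one of $G_U$, $G_W$, $H_U$ (with $H = \overline{G^b}$) is not complete. If $G_U$ is non-complete, then since $G$ is connected, Lemma~\ref{lem:P7chainDecomposition} gives a left chain decomposition of $G$, so the root of $\mathcal{T}(G)$ is a CHAIN node. Symmetrically, if $G_W$ is non-complete we get a right chain decomposition of $G$. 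If instead $H_U$ is non-complete, then — noting that $H = \overline{G^b}$ is a connected $P_7$-free bipartite graph (the bipartite complement of a $P_7$ is a $P_7$, so the class is closed under bipartite complementation) — Lemma~\ref{lem:P7chainDecomposition} applied to $H$ yields a left chain decomposition of $H$, which is exactly the CO-CHAIN case for $G$. In every case the two resulting components are $P_7$-free bipartite graphs with strictly fewer vertices, so the induction hypothesis applies to them, and hence $G$ is chain-decomposable.

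The only slightly delicate points are bookkeeping rather than substance: one must check that the recursion always strictly decreases the vertex count (the marker vertices are the thing to watch, but they are re-used copies of existing vertices of $G$, so each child still has at most $n-1$ original vertices, and when we pass to the children we work with those genuine induced subgraphs), and one must confirm that each component handed to the induction hypothesis is again a $P_7$-free bipartite graph — which is immediate since induced subgraphs of $P_7$-free bipartite graphs are $P_7$-free bipartite, and $\overline{H^b} = G$ shows the CO-UNION / CO-CHAIN components inherit the property too. I do not expect a real obstacle here: all the heavy lifting — the structural extraction of a chain decomposition from a clique cutset of the neighbourhood graph (Lemma~\ref{lem:P7chainDecomposition}), and the guarantee that some neighbourhood graph is non-complete (Lemma~\ref{lem:neighGraph}) — has already been done. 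The theorem is essentially the assembly of these lemmas into a single induction, and the main thing to get right is simply that the four decomposition rules of Section~\ref{sec:dec_scheme} are exhaustively covered by the case split ``$G$ disconnected / $\overline{G^b}$ disconnected / $G_U$ non-complete / $G_W$ non-complete / $H_U$ non-complete.''
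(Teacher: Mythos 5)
Your proof is correct and follows essentially the same route as the paper: handle disconnectedness of $G$ or $\overline{G^b}$ via the UNION/CO-UNION rules, and otherwise combine Lemma~\ref{lem:neighGraph} with Lemma~\ref{lem:P7chainDecomposition} to obtain a CHAIN or CO-CHAIN decomposition, recursing on the (hereditarily $P_7$-free bipartite) components. Your extra bookkeeping about marker vertices and strict decrease of the vertex count is a fine, if slightly more explicit, rendering of the paper's appeal to heredity.
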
 

\begin{proof}
Let $G=(U,W,E)$ be a $P_7$-free bipartite graph and $H=\overline{G^b}$.
If $G$ or $H$ is disconnected, then we apply Rules 2 or 3 of the chain decomposition scheme. 

Assume now that both $G$ are $H$ are connected. By Lemma~\ref{lem:neighGraph} at least one of the three graphs $G_U, G_W, H_U$
is not complete and hence by Lemma~\ref{lem:P7chainDecomposition}, $G$ or $H$ admits 
	a \decomp decomposition, in which case we apply Rules 4 or 5.

Since the class of $P_7$-free bipartite graphs is hereditary, repeated applications of the above procedure decompose $G$ into single vertices, 
i.e. $G$ is chain-decomposable.
\end{proof}

\noindent
Theorems \ref{th:numb_chain_dec} and \ref{thm:P7_chain_dec} imply the main result of the paper.
\begin{theorem}\label{thm:main}
The class of $P_7$-free bipartite graphs is factorial.
\end{theorem}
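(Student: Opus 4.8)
The plan is to assemble the two halves of the definition of a factorial class directly from the machinery developed in Sections~\ref{sec:scheme} and~\ref{sec:P7}. Write $X$ for the class of $P_7$-free bipartite graphs; we must show that $X_n = n^{\Theta(n)}$, that is, establish matching factorial lower and upper bounds on $X_n$.

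For the upper bound, I would invoke Theorem~\ref{thm:P7_chain_dec}, which asserts that every $P_7$-free bipartite graph is chain-decomposable. Consequently every $n$-vertex labelled graph in $X$ is counted among the $n$-vertex labelled chain-decomposable bipartite graphs, so $X_n$ is at most the number of the latter. By Lemma~\ref{lem:numOfGraphs} (equivalently, the upper-bound half of Theorem~\ref{th:numb_chain_dec}) this number is $n^{O(n)}$, whence $X_n \le n^{O(n)}$.

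For the lower bound, I would exhibit a factorial subclass of $X$. The bipartite graphs of vertex degree at most $1$ --- disjoint unions of edges and isolated vertices --- are trivially $P_7$-free (they contain no induced $P_4$, let alone $P_7$), and they form one of the minimal factorial classes of graphs: the number of $n$-vertex labelled such graphs equals the number of partial matchings on $n$ labelled vertices, which is $n^{\Theta(n)}$ (it is at least $(n-1)!! = n^{\Theta(n)}$ for even $n$, obtained by pairing up all vertices). Hence $X_n \ge n^{\Omega(n)}$.

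Combining the two bounds gives $X_n = n^{\Theta(n)}$, i.e. $X$ is factorial. There is essentially no obstacle remaining at this stage: all of the substance lies in the preceding sections --- the design of the chain decomposition scheme together with the $n^{O(n)}$ bound on the size and number of decomposition trees (Theorem~\ref{th:decTreeNodes}, Lemma~\ref{lem:numOfGraphs}) for the upper bound, and the chain of structural lemmas culminating in Theorem~\ref{thm:P7_chain_dec} for the decomposability. The only thing requiring (routine) verification here is that the chosen lower-bound class genuinely lies inside $X$ and is genuinely factorial, both of which are immediate.
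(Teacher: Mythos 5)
Your proposal is correct and follows essentially the same route as the paper: the authors likewise deduce Theorem~\ref{thm:main} by combining Theorem~\ref{thm:P7_chain_dec} (every $P_7$-free bipartite graph is chain-decomposable) with the factorial bound of Theorem~\ref{th:numb_chain_dec}, whose lower-bound half also rests on the graphs of degree at most $1$. Nothing is missing; your explicit verification of the lower-bound class is exactly the routine observation the paper makes.
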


%%%%%%%%%%%%%%%%%%%%%%%%%%%%%%%%%%%%%%%%%%%%%%%%%%%%%%%%%%%%%%%%%%%%%%%%%%%%%%%%%%%%%%%%%%%%%%
\section{Concluding remarks and open problems}
%%%%%%%%%%%%%%%%%%%%%%%%%%%%%%%%%%%%%%%%%%%%%%%%%%%%%%%%%%%%%%%%%%%%%%%%%%%%%%%%%%%%%%%%%%%%%%

In this paper, we complete the description of speeds of monogenic classes of bipartite graphs by showing that the class of $P_7$-free bipartite graphs is factorial. 
This result answers an open question from \cite{Peter} and several related open questions from \cite{LBC, Zam11, Zam13}. 
In particular, our result implies that the class of  $(K_t,P_7)$-free graphs is factorial for 
any value of $t\ge 3$.

\begin{theorem}
For any $t\ge 3$, the class of $(K_t,P_7)$-free graphs is factorial.
\end{theorem}

\begin{proof}
The class of $(K_t,P_7)$-free graphs contains all bipartite  $P_7$-free graphs and hence is at least factorial. For an upper factorial bound, we use the notion of 
locally bounded coverings introduced in \cite{LBC} and the idea of $\chi$-bounded classes. It is known (see e.g. \cite{Gyarfas}) that $P_k$-free graphs are 
$\chi$-bounded for any value of $k$ and hence the chromatic number of $(K_t,P_7)$-free graphs is bounded by a constant. As a result, every $(K_t,P_7)$-free graph $G$
can be covered by $P_7$-free bipartite graphs in such a way that every vertex of $G$ is covered by finitely many such graphs. Together with Theorem~\ref{thm:main} and
the results from \cite{LBC} this implies a factorial upper bound for $(K_t,P_7)$-free graphs.
\end{proof}

In spite of the progress made in this paper, the problem of characterizing the factorial layer remains widely open.
In the introduction, we mentioned a conjecture reducing this problem to hereditary properties of bipartite graphs.
The restriction to bipartite graphs is important on its own right and the problem remains quite challenging even under this restriction. 
In the present paper, we have completed a solution to the problem of characterizing factorial classes of bipartite graphs defined by a {\it single} forbidden induced subgraph. 
The obvious next step is characterizing factorial classes of bipartite graphs defined by {\it finitely many forbidden induced bipartite subgraphs}.

Let $M$ be a finite set of bipartite graphs and $X_M$ the class of $M$-free bipartite graphs.
In \cite{GT}, we have shown that $X_M$ is factorial if and only if it contains no boundary class.
This idea can be roughly explained as follows. It is known (can be derived  e.g. from the results in \cite{LUW95}) that the class of $(C_3,C_4,\ldots,C_k)$-free bipartite graphs 
is superfactorial for any fixed value of $k$. With $k$ tending to infinity, this sequence converges to the class of forests.
Therefore, if $X_M$ contains the class of forests, then $X_M$ is superfactorial. Indeed, in this case every graph in $M$ contains a cycle and hence $X_M$ contains the 
class of $(C_3,C_4,\ldots,C_k)$-free bipartite graphs, where $k$ is the size of a largest cycle in graphs in $M$.  
Similarly,  if $X_M$ contains the class of bipartite complements of forests, then $X_M$ is superfactorial.
Therefore, a necessary condition for $X_M$ to be factorial is that $M$ contains a forest and the bipartite complement of a forest.
The results in \cite{Peter} together with the main results of this work show that this condition is also
sufficient when $M$ consists of a single graph. 
We believe that the condition remains sufficient for an arbitrary set $M$. To prove this, it would be enough to
settle the following conjecture.

\begin{conjecture}\label{conj:tree}
	For any tree $T$, the class of $\{ T, \overline{T^b} \}$-free graphs is at most factorial.
\end{conjecture}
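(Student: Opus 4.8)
The plan is to prove Conjecture~\ref{conj:tree} by first reducing it to a statement about bipartite graphs, and then mimicking the strategy of Section~\ref{sec:P7}: exhibit a chain-decomposition-like scheme that applies to $\{T,\overline{T^b}\}$-free bipartite graphs and yields a factorial upper bound. First I would observe, exactly as in the proof of the last theorem of the excerpt, that a $\chi$-boundedness / locally bounded coverings argument (using \cite{LBC,Gyarfas}) lets one pass from the general graph class $\{T,\overline{T^b}\}$-free to its bipartite members: any tree $T$ is itself bipartite, so $\{T,\overline{T^b}\}$-free graphs exclude $T$ as an induced subgraph, hence are $P_{|V(T)|}$-free-bounded in chromatic number, hence are coverable by constantly many induced bipartite subgraphs, each of which is $\{T,\overline{T^b}\}$-free bipartite. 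So it suffices to show the class of $\{T,\overline{T^b}\}$-free bipartite graphs is at most factorial. Note $\overline{T^b}$ here must be read as the bipartite complement of $T$ with respect to a fixed bipartition of $T$.

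Next I would try to show that every $\{T,\overline{T^b}\}$-free bipartite graph is chain-decomposable, which by Theorem~\ref{th:numb_chain_dec} gives the factorial bound. The engine in the $P_7$ case is Lemma~\ref{lem:neighGraph}: when neither $G$ nor $\overline{G^b}$ is disconnected, one of $G_U,G_W,H_U$ fails to be complete, and Lemma~\ref{lem:P7chainDecomposition} then extracts a chain decomposition from a clique cutset in the corresponding neighbourhood graph. For a general tree $T$ of diameter $d$, $P_7$-freeness should be replaced by $P_{d+1}$-freeness (or a bound in terms of $|V(T)|$): the neighbourhood graphs $G_U,G_W$ of a $P_m$-free bipartite graph are $P_{\lceil m/2\rceil}$-free and $C_{\le\lfloor m/2\rfloor}$-free (generalising the $(P_4,square)$-free lemma), hence belong to a class of bounded clique-width / with a bounded-size clique cutset structure generalising quasi-threshold graphs. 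I would set up the analogue of Lemma~\ref{lem:P7_chain_dec}: a list of ``interface'' conditions on a partition $A\cup Q\cup C$, $B\cup R\cup D$ under which a $T$-free bipartite graph admits a $k$-chain decomposition, proved by induction on $|Q|$ exactly as before, with $P_7$ arguments replaced by longer forbidden-path arguments keyed to the structure of $T$. The bookkeeping will be heavier because $T$ may branch, so the two-way split $Q=Q_B\cup Q_D\cup Q_{BD}$ will become a split into boundedly many classes indexed by branch-types of $T$.

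The main obstacle, I expect, is that chain decomposition is tailored to \emph{paths}: its whole geometry (the nested completeness pattern among the $A_i,B_i,C_i,D_i$) is what a long induced path would have to respect, and it is not at all clear that excluding a branching tree $T$ forces such a linear structure. Concretely, the analogue of Lemma~\ref{lem:neighGraph} — guaranteeing that $G$, $\overline{G^b}$, or one of the derived graphs is decomposable — may simply be false for some trees, so chain decomposition as defined here may not suffice and a genuinely new (tree-shaped rather than chain-shaped) decomposition scheme may be needed, together with the corresponding generalisation of the counting argument in Section~\ref{sec:numOfGraphs} (which only used that the decomposition tree is a $2$-decomposition tree, so a constant $k$ depending on $T$ would still be fine). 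A plausible fallback, if a clean decomposition is elusive, is to prove the factorial bound directly: show that $\{T,\overline{T^b}\}$-free bipartite graphs have a ``bounded interval structure'' or bounded linear clique-width along some vertex ordering, since bounded linear clique-width already implies a factorial speed. I would regard establishing such a width bound — or the decomposition that witnesses it — as the crux of the whole argument.
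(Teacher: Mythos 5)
You have been asked to prove what the paper itself states only as a conjecture: Conjecture~\ref{conj:tree} is left open there, the paper proving only the special case $T=P_k$ with $k\le 7$ (Theorem~\ref{thm:main} together with the earlier results of \cite{Peter}). So there is no proof in the paper to compare against, and your text is a research programme rather than a proof --- you yourself concede that the needed analogue of Lemma~\ref{lem:neighGraph} ``may simply be false'' for a branching tree $T$. That concession is precisely where the open content lies: the whole chain-decomposition machinery of Section~\ref{sec:scheme} is keyed to forbidden \emph{paths} (the nested complete/anticomplete pattern among the $A_i,B_i,C_i,D_i$ is what a long induced path cannot traverse), and nothing in your sketch shows that excluding a branching tree together with its bipartite complement forces $G$, $\overline{G^b}$, or their neighbourhood graphs to admit any such structure. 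Saying that ``a genuinely new decomposition may be needed'' names the problem; it does not solve it.

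Several of the steps you do spell out are also incorrect. First, the reduction to bipartite graphs: in context the conjecture concerns hereditary classes of \emph{bipartite} graphs (it is offered as the missing ingredient for characterizing the factorial classes $X_M$ of $M$-free bipartite graphs); read over all graphs it is false for every tree on at least five vertices, since then both $T$ and $\overline{T^b}$ have independence number at least three, so every co-bipartite graph is $\{T,\overline{T^b}\}$-free and the speed is $2^{\Theta(n^2)}$. Your $\chi$-boundedness argument cannot repair this: unlike in the $(K_t,P_7)$ theorem there is no forbidden clique, and in any case $T$-freeness does not imply $P_{|V(T)|}$-freeness (a long induced path contains no branching tree), so the claimed chromatic bound does not follow --- $\chi$-boundedness of $T$-free graphs is itself Gy\'arf\'as--Sumner territory. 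Second, your proposed generalization of the $(P_4,square)$-free lemma again presupposes a forbidden path: when $T$ branches, $T$-free bipartite graphs contain arbitrarily long induced paths, so no path- or girth-type restriction on $G_U,G_W$ is available, and the quasi-threshold/clique-cutset step of Lemma~\ref{lem:P7chainDecomposition} has no analogue. Third, the fallback via bounded linear clique-width cannot work as stated: already $P_7$-free bipartite graphs have unbounded clique-width, which is exactly why this paper introduces chain decomposition instead of a width argument. In short, the proposal correctly locates the crux but does not overcome it; the statement remains a conjecture.
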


\noindent
In the terminology of boundary classes, this conjecture is equivalent to saying that in the family of hereditary properties of bipartite graphs 
the class of forests and the class of their bipartite complements are the only boundary classes.

An important special case of Conjecture~\ref{conj:tree} deals with $\{ P_k, \overline{P_k^b} \}$-free bipartite graphs.
In the present paper, we solved this case for $k \leq 7$.

\medskip
To prove our main result we showed that every $P_7$-free bipartite graph is totally decomposable
with respect to chain decomposition and bipartite complementation. It is not difficult to see that 
the class of $P_7$-free bipartite graphs is a proper subclass of chain decomposable graphs.
For instance, every path is chain-decomposable. 
It would be interesting to understand which graphs are chain-decomposable. 
In particular, is the class of chain-decomposable graphs \textit{hereditary}? 
If yes, what are the minimal graphs which are not chain-decomposable?

\end{document}